\theoremstyle{plain}
\newtheorem{theorem}{Theorem}[section]
 \newtheorem{lemma}[theorem]{Lemma}
\newtheorem{remark}{Remark}[section]
\newtheorem{assumption}{Assumption}[section]
\newcommand{\mE}{{\mathbb E}}
\newcommand{\mP}{{\mathbb P}}
\newcommand{\mR}{{\mathbb R}}
\newcommand{\ind}{{\bf 1}}
 \def\beqlb{\begin{eqnarray}}\def\eeqlb{\end{eqnarray}}
 \def\beqnn{\begin{eqnarray*}}\def\eeqnn{\end{eqnarray*}}
 \def\mbb{\mathbb}
\newcommand{\bcen}{\begin{center}}
\newcommand{\ecen}{\end{center}}
\newcommand{\bgeqn}{\begin{equation}}
\newcommand{\edeqn}{\end{equation}}
\def\ez{\varepsilon}
\def\mE{{\mbb  E}}
\def\mP{{\mbb  P}}
\def\mR{{\mbb  R}}
\begin{document}
\title{Upper deviation probabilities for level sets of a supercritical branching random walk}
\author{Shuxiong Zhang, Lianghui Luo\footnote{E-mail: lianghui.luo@qq.com}}
\maketitle


\noindent\textit{Abstract:}
Given a supercritical branching random walk $\{Z_n\}_{n\geq0}$ on $\mathbb{R}$, let $Z_n([y,\infty))$ be the number of particles located in $[y,\infty)\subset \mathbb{R}$ at generation $n$. Let $m$ be the mean of the offspring law of $\{Z_n\}_{n\geq0}$ and $I(x)$ be the large deviation rate function of the underlying random walk of $\{Z_n\}_{n\geq0}$. It is known from \cite{Biggins1977} that under some mild conditions, for $x\in(0,x^*)$, $n^{-1}\log Z_n([ xn,\infty))$ converges almost surely to $\log m-I(x)$ on the event of nonextinction as $n\to\infty$, where $x^*$ is the speed of maximal position of the branching random walk. In this work, we investigate its upper deviation probabilities, in other words, the convergence rates of
$$\mathbb{P}\left(Z_n([xn,\infty))\geq e^{an}\right)$$
as $n\to\infty$, where $x>0$ and $a>(\log m-I(x))^+$. This paper is a counterpart work of the lower deviation probabilities \cite{zhanglowerlevel} and also completes those results in \cite{levelsetzhan} for the branching Brownian motion.

\bigskip
\noindent Mathematics Subject Classifications (2020): 60F10, 60J80, 60G50.

\bigskip
\noindent\textit{Keywords}: Branching random walk; Upper deviation probability; Level set.

\section{Introduction and Main results}
\subsection{Introduction}
Generally speaking, a branching random walk (BRW) is governed by a point process on $\mathbb{R}$. Every particle moves and splits
according to this point process (see \cite{zhan}). Nevertheless, in this article, we study a simplified version by assuming that the branch and the motion are independent. Given a probability distribution $\{p_k\}_{k\geq 0}$ on $\mathbb{N}$ and a real-valued random variable $X$, a branching random walk $\{Z_n\}_{n\geq 0}$ with offspring law $\{p_k\}_{k\geq 0}$ and step size $X$ is defined as follows. At time $0$, there is one particle located at the origin (i.e., $Z_0=\delta_0$). The particle dies and produces  offsprings according to the offspring distribution $\{p_k\}_{k\geq 0}$. Afterwards, the offspring particles move independently according to the law of $X$. This forms a point process at time $1$, denoted by $Z_1$. For any point process $Z_n$, $n\geq 2$, we define it by the following iteration
$$Z_n=\sum_{x\in Z_{n-1}}\tilde{Z}_1^{x},$$
where $\tilde{Z}_1^{x}$ has the same distribution as $Z_1(\cdot-S_x)$ and $\tilde{Z}_1^{x},\ x\in Z_{n-1}$ (conditioned on $Z_{n-1}$)  are independent. Here and later, for a point process $\xi$, $x\in\xi$ means $x$ is an atom of $\xi$, and $S_x$ is the position of $x$ (i.e., $\xi=\sum_{x\in\xi}\delta_{S_x}$).\par
For $A\subset\mathbb{R}$, let
$$Z_n(A):=\#\left\{u\in Z_n: S_u\in A\right\},$$
i.e. the number of particles located in the set $A$. Write $|Z_n|:=Z_n(\mathbb{R})$, $n\geq0$ and $m:=\mathbb{E}[|Z_1|]$.
\par

Usually, we call $\{|Z_n|\}_{n\geq0}$ the branching process or the Galton-Watson process. $\{|Z_n|\}_{n\geq0}$ is called a supercritical (critical, subcritical) branching process if $m>1$ ($=1$, $<1$). In supercritical case, the branching process will survive with positive probability, otherwise it will die out almost surely (provided $p_1\neq1$). For a more detailed discussion, one can refer to \cite{Athreya}. In this paper, we always deal with the supercritical case.

\par
 According to Biggins \cite[Theorem 2]{Biggins1977}, if $1<m<\infty$, $\mathbb{E}[X]=0$ and $\mathbb{E}[e^{\kappa X}]<\infty$ for some $\kappa>0$, then for $x\in[0,x^*)$,
\begin{align}\label{4re32}
\lim_{n\rightarrow\infty}\frac{1}{n}\log Z_n([xn,\infty))=\log m-I(x),~\mathbb{P}\text{-a.s.~on nonextinction},
\end{align}
where $I(x):=\sup_{t\geq 0}\{tx-\log \mathbb{E}[e^{tX}]\}$ is the rate function (see \cite{Biggins1977}) and
\[x^*:=\sup\{y\geq0:I(y)\leq\log m\}\]
is the speed of maximal position of a branching random walk (see \cite{Hammersley74}). In this work, we are going to study the corresponding upper deviation probabilities, i.e. the decay rate of
$$\mathbb{P}(Z_n([xn,\infty))\geq e^{an})$$
as $n\rightarrow\infty$, where $x>0$ and $a>(\log m-I(x))^+$.\par
In fact, this question was first studied by A\"idekon, Hu and Shi \cite[Theorem 1.1]{levelsetzhan} for the binary branching Brownian motion $\{Z_t\}_{t\geq0}$. To be specific, they proved that if $x>0$ and $(1-\frac{x^2}{2})^+<a<1$, then
$$\lim_{t\to\infty}\frac{1}{t}\log\mP(Z_t([xt,\infty))\geq e^{at})=-\left[\frac{x^2}{2(1-a)}-1\right].$$
\par
 A branching random walk can be viewed as a discrete but somewhat generalized version of the classical branching Brownian motion. Thus, our work is a generalization of \cite[Theorem 1.1]{levelsetzhan}. However, since our assumptions about the offspring law and step size are much weaker than \cite{levelsetzhan}, this makes the proof delicate. For example, in our setting, the rate function $I(x)$ and the distribution of $|Z_n|$ do not have explicit expression, some equations and deviation probabilities involved can not be dealt with as those in \cite{levelsetzhan}. The difficulty related to $I(x)$ is overcome by a careful analysis of several optimization problems using the convexity and asymptotics of $I(x)$. For $|Z_n|$, we use the uniform boundedness of the exponential moment of $|Z_n|m^{-n}$ (see \cite[Theorem 4]{athreya94}) to obtain upper deviation probabilities of it. Furthermore, we find the following new decay scales. If the essential supremum of the step size is bounded, then $\mathbb{P}(Z_n([xn,\infty))\geq e^{an})$ may decay double-exponentially. We also consider the case $a\geq\log m$, which has not been studied in \cite{levelsetzhan}, and the decay scale also turns out to be double-exponential. Moreover, we show that heavy-tailed offspring law may lead phase transition for the upper deviation probability.

\par
For the corresponding lower deviation probabilities of level sets, namely the decay rate of
$\mathbb{P}(Z_n([xn,\infty))<e^{an}),$
where $x\in(0,x^*)$ and $0\leq a<\log m-I(x)$, one can refer to Zhang \cite{zhanglowerlevel}; see also \"{O}z \cite{Mehmet} for the lower deviation probability of local mass of a branching Brownian motion. The lower deviation probability and upper deviation probability are essentially two different questions. Firstly, the lower deviation aims at keeping a small population in $[xn,\infty)$ at time $n$. So, the strategy is to force particles before some intermediate time $t_n$ to produce as less descendants as possible, and in the meanwhile, one should control individuals at time $t_n$ locate at a low position. However, for the upper deviation, the strategy is to force the maximum of the branching random walk at time $t_n$ become large enough such that the sub-branching random walk emanating from the maximum at time $t_n$ can normally produce $e^{an}$ descendants in $(nx,\infty)$ at time $n$.
Secondly, in the lower deviation probability, the decay scales mainly depend on the tail probability of the step size and whether $p_1$ is 0 or not. But, in the upper deviation probability, the decay scales depend on the tail probability of the offspring law and the size of $a$ and $\log m$.
Therefore, both in terms of method and conclusion, to study the the upper deviation will make some progress of the large deviation theory of the branching random walk.

\par
 We mention here that, since the last few decades, the branching random walk has been extensively studied due to its connection to many fields, such as Gaussian multiplicative chaos, random walk in random environment, random polymer, random algorithms and discrete Gaussian free field etc; see \cite{hushi}, \cite{liu06}, \cite{bramsonding15} and \cite{levelsetzhan} references therein. One can refer to Shi \cite{zhan} for a more detailed overview. Especially, the large deviation probability (LDP) for branching random walks and branching Brownian motions on real line have attracted many researchers' attention. For example, Hu \cite{yhu}, Gantert and H{\"o}felsauer \cite{Gantert18} and Chen and He \cite{Helower} considered the LDP and the moderate deviation probability of the branching random walk's maximum (for the maximum of branching Brownian motion, see Chauvin and Rouault \cite{chauvin} and Derrida and Shi \cite{derrida16,derrida17,derrida172}). For the LDP of empirical distribution, see Louidor and Perkins \cite{Louidor}, Chen and He \cite{ChenHe} and Zhang \cite{zhangempirical}. Some other related works include Rouault \cite{rouault}, Buraczewski and Ma\'slanka \cite{Burma} and Bhattacharya \cite{bhattacharya}.
 \subsection{Main Results}

 Let $L\in(-\infty,\infty]$ be the essential supremum of the step size $X$, i.e. $L=\sup\{x\in \mR:\mathbb{P}(X>x)>0\}$. In the
sequel of this work, we always need the following assumptions.
\begin{assumption}\label{t335rw2}\leavevmode \\
(i) $p_0=0$, $1<m<\infty$;\\
(ii) $\mathbb{E}[X]=0$, $\mP(X=0)<1$ and $\mathbb{E}[e^{\kappa X}]<\infty$ for some $\kappa>0$.
\end{assumption}
 \begin{remark} If $p_0>0$, using similar arguments of this paper, one can also obtain the following theorems under the probability $\mathbb{P}(\ \cdot\ |\ |Z_n|>0,\ \forall\ n\geq 0)$. $\mathbb{E}[X]=0$ is only made to simplify the statement. $\mP(X=0)<1$ is made to avoid trivial case. The other assumptions are necessary for the almost sure convergence (\ref{4re32}).
 \end{remark}
 Now, we are ready to show our main results. Recall that $I(x)=\sup_{t\geq 0}\{tx-\log \mathbb{E}[e^{tX}]\}$.
\begin{theorem}\label{upth1}
Assume $\mathbb{E}[e^{\theta |Z_1|}]<\infty$ for some $\theta>0$, $x\in(0,L)$ and $a\in((\log m-I(x))^+,\log m)$. If
 $L=\infty$, then
$$\lim\limits_{n\rightarrow\infty}\frac{1}{n}\log\mathbb{P}(Z_n([xn,\infty))\geq e^{an})=-I(a,x)\in(-\infty,0),$$
where
$$I(a,x):=\inf_{s\in\left(0,1-\frac{a}{\log m}\right],y\in(0,x]:\atop
~\log m-I\left(\frac{x-y}{1-s}\right)=\frac{a}{1-s}}\left\{sI\left(\frac{y}{s}\right)-s\log m\right\}.$$
\end{theorem}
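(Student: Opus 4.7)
The heuristic driving the result is the single-ancestor strategy: to place $\geq e^{an}$ particles above level $xn$ at generation $n$, the cheapest route is for a single ancestor at some intermediate generation $k=\lfloor sn\rfloor$ to travel to height $yn$, after which its subtree (a fresh BRW over $(1-s)n$ generations) grows at its natural Biggins rate and deposits roughly $e^{(1-s)n[\log m - I((x-y)/(1-s))]}$ particles above $(x-y)n$. The equality $(1-s)[\log m - I((x-y)/(1-s))]=a$ in the definition of $I(a,x)$ encodes that this typical count equals $e^{an}$, while the displacement costs $\exp\{-n[sI(y/s)-s\log m]\}$: among the $m^k\approx e^{sn\log m}$ candidate ancestors at time $k$, each reaches $[yn,\infty)$ with probability $\sim e^{-kI(y/s)}$ by Cram\'er. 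The constraint $s\leq 1-a/\log m$ ensures the subtree has at least $e^{an}$ total descendants.

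\textbf{Lower bound on the probability.} Fix $\eta,\epsilon>0$, pick a near-minimizer $(s^*,y^*)$ of the infimum within $\eta$ of $I(a,x)$, set $y^{**}=y^*+\epsilon$ and $k=\lfloor s^* n\rfloor$. I lower bound $\mathbb{P}(Z_n([xn,\infty))\geq e^{an})$ by the probability that some $v\in Z_k$ satisfies $S_v\in[y^{**}n,y^{**}n+1]$ and its subtree (relative to $v$) places at least $e^{an}$ particles in $[(x-y^{**})n,\infty)$. By the branching property this event factors and
\[
\mathbb{P}(Z_n([xn,\infty))\geq e^{an})\geq \mathbb{P}\bigl(\exists v\in Z_k:\,S_v\in[y^{**}n,y^{**}n+1]\bigr)\cdot \mathbb{P}\bigl(Z_{n-k}([(x-y^{**})n,\infty))\geq e^{an}\bigr).
\]
The second factor tends to $1$ by \eqref{4re32}, since $\epsilon>0$ was chosen so that $(1-s^*)[\log m-I((x-y^{**})/(1-s^*))]>a$ strictly. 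The first factor is at least $\exp\{-n[s^*I(y^{**}/s^*)-s^*\log m]-o(n)\}$ via a second-moment (Paley--Zygmund) argument applied to $Z_k([y^{**}n,y^{**}n+1])$, whose first moment $m^k\mathbb{P}(S_k\in[y^{**}n,y^{**}n+1])$ is estimated using Cram\'er's theorem and a local CLT under an exponential tilt. Sending $\eta,\epsilon\downarrow 0$ gives the matching rate.

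\textbf{Upper bound on the probability.} Fix $(s,y)$ on the feasibility curve (to be optimized over) and set $k=\lfloor sn\rfloor$, $A=\{\max_{v\in Z_k}S_v<yn\}$. Decompose
\[
\mathbb{P}(Z_n([xn,\infty))\geq e^{an})\leq \mathbb{P}(A^c)+\mathbb{P}\bigl(A\cap\{Z_n([xn,\infty))\geq e^{an}\}\bigr).
\]
The first term is $\mathbb{P}(A^c)\leq\mathbb{E}[Z_k([yn,\infty))]=m^k\mathbb{P}(S_k\geq yn)\leq \exp\{-n[sI(y/s)-s\log m]+o(n)\}$ by Markov and Cram\'er. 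For the second term, applying Markov to the conditional expectation given $Z_k$ and invoking the many-to-one lemma yields
\[
\mathbb{P}\bigl(A\cap\{Z_n([xn,\infty))\geq e^{an}\}\bigr)\leq e^{-an}m^n\,\mathbb{P}(S_k<yn,\,S_n\geq xn),
\]
whose right-hand side is controlled by the joint LDP and is of order $\exp\{-an+n\log m - n[sI(y/s)+(1-s)I((x-y)/(1-s))]+o(n)\}$. Using the feasibility constraint $(1-s)I((x-y)/(1-s))=(1-s)\log m-a$ simplifies this piece to $\exp\{-n[sI(y/s)-s\log m]+o(n)\}$, matching the first term. Optimizing over $(s,y)$ on the feasibility curve recovers the rate $-I(a,x)$.

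\textbf{Main obstacles.} The principal technical challenge is that the crude Markov bound above must be supplemented with sharp tail estimates of the form
\[
\mathbb{P}\bigl(Z_m([wm,\infty))\geq N\bigr)\leq \exp\bigl\{-m[I(w)-\log m]^+ + o(m)\bigr\}
\]
uniform in $N$ not much larger than the Biggins mean $e^{m(\log m-I(w))}$. This requires combining Cram\'er-type estimates for the underlying walk with tail bounds on $|Z_m|m^{-m}$; here the exponential moment hypothesis $\mathbb{E}[e^{\theta|Z_1|}]<\infty$ together with Athreya's estimate \cite{athreya94} is crucial. A secondary analytical task is verifying that $-I(a,x)\in(-\infty,0)$: strict positivity uses $a>(\log m-I(x))^+$ to prevent the cost from vanishing as $(s,y)\to(0,0)$ along the feasibility curve (which only meets this corner when $a=\log m-I(x)$), while finiteness uses $a<\log m$ to ensure the feasibility set is nonempty; both hinge on the convexity and asymptotic behaviour of $I(\cdot)$.
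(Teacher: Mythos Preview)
Your lower bound is essentially the paper's argument (the paper cites the maximal-displacement large deviation from Gantert--H\"ofelsauer rather than a direct Paley--Zygmund computation, but the mechanism is identical). The problem is your upper bound.

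The claim that
\[
\mathbb{P}(S_k<yn,\ S_n\geq xn)\ \asymp\ \exp\bigl\{-n\bigl[sI(y/s)+(1-s)I((x-y)/(1-s))\bigr]\bigr\}
\]
is false: the joint LDP rate is only $I(x)$. Indeed, on the feasibility curve one always has $y_s>sx$ (this follows from $I((x-y_s)/(1-s))=\log m-\frac{a}{1-s}<I(x)$ and monotonicity of $I$), so the straight-line path $S_j\approx jx$ already lies in the constraint set $\{S_k<yn,\,S_n\geq xn\}$. Hence the constraint $S_k<yn$ is not binding and your second term is only
\[
e^{-an}m^n\,\mathbb{P}(S_k<yn,\ S_n\geq xn)\ \leq\ e^{-n[a-\log m+I(x)]+o(n)},
\]
which is the naive first-moment (Markov/many-to-one) bound. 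In the Gaussian case one checks directly that $I(a,x)=\frac{x^2\log m}{2(\log m-a)}-\log m>a-\log m+\frac{x^2}{2}$ whenever $a>(\log m-I(x))^+$, so your bound is strictly weaker than the theorem. Optimizing over $(s,y)$ does not help, because the second term does not depend on $(s,y)$ after the correct LDP rate is inserted.

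You correctly flag under ``Main obstacles'' that the crude Markov bound must be supplemented, but this is exactly the heart of the matter and your sketch does not address it. The paper's upper bound does \emph{not} use a single intermediate time. It discretizes $[0,n]$ into $\sim n^{1-\delta}$ blocks of length $n^\delta$ and classifies entire coarse-grained trajectories into ``good'' paths (for which a first-moment bound at the first time the path is high enough already gives rate $I(a,x)$) and ``bad'' paths. On bad paths the expected number of particles following the path is at most $e^{(a-\varepsilon)n}$ at every stage, and the exponential-moment hypothesis on $|Z_1|$ (via Athreya's bound $\sup_n\mathbb{E}[e^{\theta_1 W_n}]<\infty$) feeds into an inhomogeneous Galton--Watson concentration estimate (Proposition~2.1 of A\"{\i}dekon--Hu--Shi) to show that exceeding $e^{an}$ along a bad path has probability decaying faster than any exponential. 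This double-exponential control of bad paths is precisely what replaces your failed second-term estimate, and it cannot be obtained from a single-time decomposition plus Markov.
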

\begin{remark} According to the local survival probabilities studied in \cite{rouault} and truncation arguments used in \cite{bertoin}, one can obtain the upper deviation probabilities of the maximum of a general branching random walk (i.e. defined by a point process). Then, making use of the upper deviation probabilities of the maximum, one can generalize Theorem \ref{upth1} to the general branching random walk setting.
\end{remark}
\begin{remark}\label{4r4rdef}
Denote by $y_s$ the unique solution of the equation (w.r.t. $y$)
\begin{equation}\label{solution}
 \log m-I\left(\frac{x-y}{1-s}\right)=\frac{a}{1-s}
\end{equation}
 on $(0,x]$, where $s\in(0,1-\frac{a}{\log m}]$; see Lemma \ref{yohua} for the proof. Theorem \ref{upth1} still holds if we replace the condition $L=\infty$ with one of the following conditions:\\
 (i) $L<\infty$, $x^*<L$, $\mathbb{P}(X=L)=0$ and there exists some $s\in\big(0,1-\frac{a}{\log m}\big]$ such that
    $$y_s/s< L;$$
 (ii) $L<\infty$, $x^*<L$, $\mathbb{P}(X=L)>0$ and there exists some $s\in\big(0,1-\frac{a}{\log m}\big]$ such that $$y_s/s\leq L.$$
\end{remark}
\begin{remark}We give several examples to illustrate Remark \ref{4r4rdef}. \\
(i) If $X$ follows uniform distribution on $(-1,1)$, $a\in\left((\log m-I(x))^+,(1-\frac{x}{L})\log m\right)$ and $s^*:=1-\frac{a}{\log m}$, then $y_{s^*}=x$. This implies $X$ satisfies Remark \ref{4r4rdef} (i).  \\
(ii) Let $X$ satisfy $\mathbb{P}(X=-1)=\mathbb{P}(X=1)=0.5$, $a\in\left((\log m-I(x))^+,(1-\frac{x}{L})\log m\right]$ and $m=1.5$. Then, similarly to (i), it is simple to see that $X$ satisfies Remark \ref{4r4rdef} (ii). \\
(iii) Now, we give an example such that none of the Remark \ref{4r4rdef} (i) and (ii) holds. Since $\lim_{s\to0}\frac{y_s}{s}=\infty$ (see (\ref{47ytgth12cdgr}) below), there exists $\delta\in(0,1)$ such that for any $s\in(0,\delta)$, $\frac{y_s}{s}>2L$ and $(1-\delta)\log m>\log m-I(x)$. Finally, one can see that if $a\in((1-\delta)\log m, \log m)$, then $\frac{y_s}{s}>2L$ for $s\in\big(0,1-\frac{a}{\log m}\big]$.

\end{remark}

In the next theorem, we will show that if none of Remark \ref{4r4rdef} (i) and (ii) holds, then $\mP(Z_n([xn,\infty))\geq e^{an})$ may decay double-exponentially. Put $b:=\inf\{k\geq m:p_k>0\}$.
\begin{theorem}\label{12defedi} Assume $\mathbb{E}[e^{\theta |Z_1|}]<\infty$ for some $\theta>0$, $x\in(0,L)$ and $a\in((\log m-I(x))^+,\log m)$.  Suppose $L<\infty$ and one of the following two conditions holds.\\
\noindent(i) $x^*=L;$\\
\noindent(ii)$x^*<L$ and
    $$y_s/s> L, \forall s\in\Big(0,1-\frac{a}{\log m}\Big].$$
Then
\begin{align}
0&<\liminf_{n\to\infty}\frac{1}{n}\log\left[-\log\mathbb{P}(Z_n([xn,\infty))\geq e^{an})\right]\cr
&\leq\limsup_{n\to\infty}\frac{1}{n}\log\left[-\log\mathbb{P}(Z_n([xn,\infty))\geq e^{an})\right]\leq c^*\log b,\nonumber
\end{align}
where $c^*\in(0,\frac{x}{L})$ is the unique solution of the equation (w.r.t. $c$)
$$
\log m-I\left(L+\frac{x-L}{1-c}\right)-\frac{a-\log b}{1-c}=\log b.
$$
\end{theorem}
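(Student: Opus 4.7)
The plan is to establish the upper and lower bounds on the decay rate separately, using a common splitting of the BRW at an intermediate time $t_n=\lfloor c n\rfloor$ with $c$ close to $c^*$.

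\textbf{Upper bound on the decay rate.} To show $\limsup_n n^{-1}\log[-\log\mathbb{P}(Z_n([xn,\infty))\geq e^{an})]\leq c^*\log b$, I would exhibit an event implying $\{Z_n([xn,\infty))\geq e^{an}\}$ whose probability has $-\log$ of order $b^{c^*n}$. Fix $\varepsilon>0$ small and set $t_n=\lfloor(c^*+\varepsilon)n\rfloor$. The favorable event is the conjunction of: (a) every particle in generations $0,\ldots,t_n-1$ produces exactly $b$ offspring; (b) every displacement occurring in those generations lies in $[L-\varepsilon_n,L]$ for some sequence $\varepsilon_n\downarrow 0$; (c) each of the $b^{t_n}$ subtrees rooted at time $t_n$ exhibits the Biggins asymptotic \eqref{4re32}. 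The probability of (a) equals $p_b^{(b^{t_n}-1)/(b-1)}$, and, conditional on (a), the probability of (b) equals $q_n^{O(b^{t_n})}$ with $q_n=\mathbb{P}(X\in[L-\varepsilon_n,L])$. Choosing $\varepsilon_n\downarrow 0$ slowly enough that $|\log q_n|=o(b^{t_n})$, both contributions give $n^{-1}\log(-\log)\to c^*\log b$. Under (a)--(b), the $b^{t_n}$ particles at time $t_n$ all lie in $[(L-\varepsilon_n)t_n, Lt_n]$; applying \eqref{4re32} to the independent subtrees with target displacement $r=(x-c^*L)/(1-c^*)=L+(x-L)/(1-c^*)$, each subtree contributes at least $\exp\{(n-t_n)[\log m-I(r)-o(1)]\}$ particles in $[xn,\infty)$. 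Since the defining equation of $c^*$ rewrites as
\begin{equation*}
c^*\log b+(1-c^*)[\log m-I((x-c^*L)/(1-c^*))]=a,
\end{equation*}
the total count exceeds $e^{an+o(n)}$; letting $\varepsilon\downarrow 0$ concludes.

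\textbf{Lower bound on the decay rate.} To show $\liminf_n n^{-1}\log[-\log\mathbb{P}(\cdots)]>0$, one must verify that $\mathbb{P}(Z_n([xn,\infty))\geq e^{an})$ decays at least double-exponentially. The standing hypothesis---either $x^*=L$, or $y_s/s>L$ for every $s\in(0,1-a/\log m]$---precisely rules out the single-ancestor strategy underlying Theorem \ref{upth1}: no intermediate time $sn$ admits a sparse set of ancestors from which normal BRW yields $e^{an}$ descendants by time $n$, because the required height $y_s$ exceeds the maximum reachable height $Ls$. The deviation therefore forces an overbranching population of size $\gg e^{\delta n}$ for some $\delta>0$, and producing such a population is double-exponentially costly. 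Quantitatively, I would apply the exponential Markov inequality
\begin{equation*}
\mathbb{P}(Z_n([xn,\infty))\geq e^{an})\leq e^{-\lambda e^{an}}\,\mathbb{E}[e^{\lambda Z_n([xn,\infty))}],
\end{equation*}
with $\lambda=\lambda_n$ of order $e^{-an+\delta n}$. The exponential moment is controlled by a recursion: writing $\phi_n(\lambda;y)=\mathbb{E}[e^{\lambda Z_n([y,\infty))}]$, one has $\phi_n(\lambda;y)=\mathbb{E}\bigl[\prod_{u\in Z_1}\phi_{n-1}(\lambda;y-S_u)\bigr]$, and combining $\mathbb{E}[e^{\theta|Z_1|}]<\infty$ with the displacement tail $\mathbb{P}(S_n\geq xn)\leq e^{-nI(x)}$ aims at $\phi_n(\lambda_n;xn)=e^{O(n)}$; together with the Markov step this yields double-exponential decay at rate at least $\delta$.

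\textbf{Main obstacle.} The main technical difficulty is the lower bound. Because $a<\log m$, the total population $|Z_n|$ itself easily exceeds $e^{an}$, so naive Markov on $Z_n([xn,\infty))$ alone gives only exponential decay. One must exploit both the spatial restriction $S_k\leq Lk$---which forces any particle in $[xn,\infty)$ at time $n$ to have an ancestor at time $sn$ in $[(x-L(1-s))n,\infty)$---and the exclusion of the single-ancestor regime, so that the exponential-moment recursion propagates without picking up an extra exponential factor. Along the way, the existence and uniqueness of $c^*\in(0,x/L)$ satisfying the defining equation should be verified by monotonicity of its left-hand side in $c$, which follows from the convexity of $I$ together with $b\geq m>a/\log 1$ and $x<L$.
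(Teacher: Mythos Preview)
Your argument for $\limsup\leq c^*\log b$ is correct and essentially identical to the paper's: force $b$-ary branching with displacements near $L$ up to time $t_n=\lfloor(c^*+\varepsilon)n\rfloor$, then invoke Biggins on the $b^{t_n}$ independent subtrees; your rewriting of the defining equation as $c^*\log b+(1-c^*)\bigl[\log m-I\bigl((x-c^*L)/(1-c^*)\bigr)\bigr]=a$ is exactly what makes the count work. (Minor point: your monotonicity sketch for uniqueness of $c^*$ contains the meaningless expression ``$b\geq m>a/\log 1$''; the paper simply checks that the left-hand side is strictly increasing and continuous in $c$ with opposite signs relative to $\log b$ at $c=0$ and $c=x/L$.)

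For $\liminf>0$, your intuition is right but the proposed method has a genuine gap. The recursion $\phi_n(\lambda;y)=\mathbb{E}\bigl[\prod_{u\in Z_1}\phi_{n-1}(\lambda;y-S_u)\bigr]$ is spatially inhomogeneous: to bound $\phi_n(\lambda_n;xn)$ you need control of $\phi_{n-1}(\lambda_n;\cdot)$ at \emph{every} level simultaneously, and you have not explained how ``combining $\mathbb{E}[e^{\theta|Z_1|}]<\infty$ with the displacement tail'' closes this to $e^{O(n)}$. This is precisely the obstacle you flag at the end---naive Markov only yields exponential decay since $a<\log m$---and nothing in the proposal resolves it; knowing that $y_s/s>L$ rules out the single-ancestor strategy is not by itself an analytic bound on $\phi_n$.

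The paper supplies the missing mechanism via the path-partition technique of \cite{levelsetzhan} already used for Theorem~\ref{upth1}: discretize time into blocks of length $\lfloor n^\delta\rfloor$ and space into intervals of length $n^{\delta'}$, so that there are only $e^{o(n)}$ paths $f$. A path is \emph{good} if at some intermediate time $s_i$ the first-moment bound $m^{s_i}\mathbb{P}(S_{s_i}\approx f(s_i))$ combined with Biggins growth from $s_i$ to $n$ already accounts for $e^{(a-\varepsilon)n}$ particles, and \emph{bad} otherwise. Under the hypothesis of this theorem one shows (after a small perturbation argument) that any good path would require $y(s,\varepsilon)/s-\varepsilon>L$, hence $I=+\infty$ and the good-path contribution vanishes identically. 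For bad paths, the number of particles following $f$ is dominated by an inhomogeneous Galton--Watson process whose cumulative means never exceed $e^{(a-\varepsilon)n}$; a Chernoff-type bound for such processes (Proposition~2.1 of \cite{levelsetzhan}) gives decay $\exp(-c\,e^{\varepsilon' n})$ uniformly in $f$, and a union bound over the $e^{o(n)}$ paths finishes. Your exponential-moment recursion is morally a continuum version of this, but without the discretization you cannot separate the good (vacuous) and bad (double-exponential) regimes, which is exactly where the hypothesis enters.
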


Though we get Remark \ref{4r4rdef} and Theorem \ref{12defedi}, we are unable to obatin the decay scale of $\mP(Z_n([xn,\infty))\geq e^{an})$ if
 $L<\infty$, $\mP(X=L)=0$ and
\[ \inf_{s\in(0,1-\frac{a}{\log m}]}\frac{y_s}{s}=L.\]

 In the next, we consider the case $a\geq \log m$. Let $R_n$ be the rightmost position of a branching random walk with offspring law $p_k$ and step size $-X$ at generation $n$. If there exists some $k\geq2$ such that $p_k=1$, then it is easy to see that
\begin{align}
\mathbb{P}(Z_n([xn,\infty))\geq e^{an})=
\begin{cases}
\mathbb{P}(R_n\leq -xn ),&~a=\log m;\cr
0,&~a>\log m,\nonumber
\end{cases}
\end{align}
where $\mP(R_n\leq -xn)$ has been well studied in \cite{ChenHe}. Thus, we only need to consider the case that the branch is random. Define $\mu:=\sup\{k:p_k>0,~k\geq1\}\in(0,\infty]$ and $\alpha:=\inf\{k:p_{k}>0,k\geq e^a\}$.
\begin{theorem}\label{remark2}
 Assume $\mathbb{E}[e^{\theta |Z_1|}]<\infty$ for some $\theta>0$. Suppose $p_k<1$ for any $k\geq2$, $x\in(0,L)$ and $a\in[\log m, \log\mu]\cap\mathbb{R}$. Then

\begin{align}\label{4rrfefcb}
a-\log m&\leq\liminf_{n\to\infty}\frac{1}{n}\log[-\log\mP(Z_n([xn,\infty))\geq e^{an})]\cr
&\leq \limsup_{n\to\infty}\frac{1}{n}\log[-\log\mP(Z_n([xn,\infty))\geq e^{an})]\leq\log \alpha.
\end{align}
\end{theorem}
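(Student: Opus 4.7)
The plan is to prove the two outer inequalities in \eqref{4rrfefcb} separately, by a Markov-type moment estimate for the lower bound on $-\log\mP$ and by an explicit favourable-event construction for the upper bound.

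For the lower inequality $a-\log m\le\liminf_{n\to\infty}\frac{1}{n}\log[-\log\mP(Z_n([xn,\infty))\ge e^{an})]$, I would start from the trivial containment $\{Z_n([xn,\infty))\ge e^{an}\}\subset\{|Z_n|\ge e^{an}\}$ and couple it with a uniform exponential moment bound on the Kesten-Stigum martingale. Precisely, the assumption $\mE[e^{\theta|Z_1|}]<\infty$ combined with Athreya \cite[Theorem 4]{athreya94} yields $\sup_n\mE[e^{\theta'|Z_n|m^{-n}}]\le C$ for some constants $\theta',C>0$. Markov's inequality applied to $|Z_n|m^{-n}$ at level $e^{(a-\log m)n}$ then gives
$$\mP\bigl(|Z_n|\ge e^{an}\bigr)\le C\exp\bigl(-\theta' e^{(a-\log m)n}\bigr),$$
and taking $\log$ twice and dividing by $n$ recovers the exponent $a-\log m$ in the regime $a>\log m$. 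The boundary case $a=\log m$ reduces to checking $\mP(\cdots)\to 0$, which follows from first-moment Markov $\mP(\cdots)\le m^{-n}\mE[Z_n([xn,\infty))]\le e^{-nI(x)(1+o(1))}$, yielding $\liminf\ge 0=a-\log m$.

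For the upper inequality $\limsup_{n\to\infty}\frac{1}{n}\log[-\log\mP(\cdots)]\le\log\alpha$, the plan is to construct one event $A_n$ that forces $Z_n([xn,\infty))\ge e^{an}$ deterministically. Let $A_n$ be the event that every particle in generations $0,1,\dots,n-1$ produces exactly $\alpha$ offspring and that every displacement along every edge of the first $n$ generations satisfies $X\ge x$. On $A_n$ there are $\alpha^n\ge e^{an}$ particles at generation $n$, each at position $\ge xn$, so $A_n\subseteq\{Z_n([xn,\infty))\ge e^{an}\}$. With $T_n:=(\alpha^n-1)/(\alpha-1)$ denoting the number of branching events in this tree and $\alpha T_n$ the number of edges, independence of the reproduction and displacement mechanisms gives
$$\mP(A_n)=p_\alpha^{T_n}\,\mP(X\ge x)^{\alpha T_n}.$$
Both factors are strictly positive: $p_\alpha>0$ because $a\le\log\mu$ ensures the set $\{k\ge e^a:p_k>0\}$ is nonempty, and $\mP(X\ge x)>0$ because $x<L$ by the definition of the essential supremum. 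Hence $-\log\mP(A_n)=c\,T_n$ for some finite $c>0$, and two logarithms yield $n\log\alpha+O(1)$, as required.

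I do not anticipate a real obstacle. Both directions rely on the double-exponential scale to absorb polynomial corrections in a routine way; the only items to verify are the well-definedness of $\alpha$ (immediate from $a\le\log\mu$), the positivity of $\mP(X\ge x)$ for $x<L$, and the applicability of Athreya's uniform exponential-moment bound, which is precisely what the hypothesis $\mE[e^{\theta|Z_1|}]<\infty$ is designed to provide.
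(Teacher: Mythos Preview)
Your proposal is correct and follows essentially the same approach as the paper: for the lower inequality you use the inclusion $\{Z_n([xn,\infty))\ge e^{an}\}\subset\{|Z_n|\ge e^{an}\}$ together with Athreya's uniform exponential-moment bound for $W_n=|Z_n|m^{-n}$ and Markov's inequality, exactly as the paper does; for the upper inequality you force every particle to have exactly $\alpha$ children and every displacement to exceed $x$, which is again precisely the paper's construction (with the same counts $T_n=(\alpha^n-1)/(\alpha-1)$ for branching events and $\alpha T_n$ for edges). Your explicit treatment of the boundary case $a=\log m$ via a first-moment Markov bound is a small refinement the paper does not spell out.
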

\begin{remark}\label{remark3}Although the lower bound of (\ref{4rrfefcb}) is degenerate in the case $a=\log m$, one can use Theorem \ref{12defedi} to show that the lower bound is still positive if $L<\infty$; see Section \ref{sec3} for the proof.
\end{remark}
\begin{remark}\label{remark1} Theorem \ref{upth1} still holds, if we replace the condition $\mathbb{E}[e^{\theta |Z_1|}]<\infty$ for some $\theta>0$ with $\lim_{x\to\infty}\mathbb{P}(|Z_1|>x) e^{-\lambda x^{\alpha}}=1$ for some $\lambda>0$, $\alpha\in(0,1)$; see Subsection \ref{subsec3} for the proof.
\end{remark}

Though Remark \ref{remark1} tells us that the assumption $\mathbb{E}[e^{\theta |Z_1|}]<\infty$ can be weaken, the following theorem shows that if the offspring law has a Pareto tail, then the limit will change. To demonstrate this, we consider the standard normal step size. Hence, $I(x)=x^2/2$ for $x\geq0$. We write $f(y)=\Theta(1)g(y)$ if $0<\liminf_{y\to+\infty} f(y)/g(y)\leq \limsup_{y\to+\infty} f(y)/g(y)<\infty$.
\begin{theorem}\label{upth2} Assume~$\mathbb{P}(|Z_1|>y)=\Theta(1)y^{-\beta}$ for some $\beta>1$. Suppose $X$ is a standard normal random variable,~$x>0$ and~$a\in((\log m-\frac{x^2}{2})^+,\infty)$. Then
\begin{align}
-\left[(\beta-1)a+a-\left(\log m-\frac{x^2}{2}\right)\right]&\leq\liminf_{n\rightarrow\infty}\frac{1}{n}\log\mathbb{P}(Z_n([xn,\infty))\geq e^{an})\cr
&\leq\limsup_{n\rightarrow\infty}\frac{1}{n}\log\mathbb{P}(Z_n([xn,\infty))\geq e^{an})\leq-\left[a-\left(\log m-\frac{x^2}{2}\right)\right].\nonumber
\end{align}
\end{theorem}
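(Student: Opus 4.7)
The plan is to prove the two bounds separately; Theorem \ref{upth2} asserts asymmetric rates so no duality argument is required.

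\emph{Upper bound.} I would combine Markov's inequality with the many-to-one identity to obtain
$$\mathbb{P}(Z_n([xn,\infty))\geq e^{an})\leq e^{-an}\mathbb{E}[Z_n([xn,\infty))]=e^{-an}m^n\mathbb{P}(S_n\geq xn),$$
where $S_n$ is a sum of $n$ i.i.d.\ standard normals. The Gaussian Cram\'er bound $\mathbb{P}(S_n\geq xn)\leq e^{-nx^2/2}$ then produces $e^{-n[a-(\log m-x^2/2)]}$, yielding the required $\limsup$ bound.

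\emph{Lower bound.} The heavy-tailed strategy is to force one particle at generation $n-1$, located just to the right of $xn$, to have an atypically large family at generation $n$. Fix $I_n:=[xn,xn+1]$, $K_n:=\lceil 4 e^{an}\rceil$, write $\xi_u$ for the number of children of $u\in Z_{n-1}$, and put
$$N_n:=Z_{n-1}(I_n),\qquad q_n:=\mathbb{P}(|Z_1|\geq K_n)=\Theta(e^{-\beta a n}),\qquad A_n:=\#\{u\in Z_{n-1}:S_u\in I_n,\ \xi_u\geq K_n\}.$$
For any such $u$, each child lies at $S_u+X\geq xn$ with probability $\geq\mathbb{P}(X\geq 0)=1/2$, so conditional on $\xi_u\geq K_n$ a Chernoff bound places at least $e^{an}$ children in $[xn,\infty)$ with probability tending to $1$. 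Hence $\mathbb{P}(Z_n([xn,\infty))\geq e^{an})\geq(1-o(1))\,\mathbb{P}(A_n\geq 1)$, and it suffices to prove $\mathbb{P}(A_n\geq 1)\gtrsim e^{n(\log m-x^2/2-\beta a)+o(n)}$. The many-to-one lemma combined with the local CLT gives $\mathbb{E}[A_n]=q_n\mathbb{E}[N_n]=\Theta(n^{-1/2})\,e^{n(\log m-x^2/2-\beta a)}$, exactly the target rate, and I would apply the Paley--Zygmund inequality $\mathbb{P}(A_n\geq 1)\geq\mathbb{E}[A_n]^2/\mathbb{E}[A_n^2]$. Since $\xi_u$ is independent of $\mathcal{F}_{n-1}$, one has $\mathbb{E}[A_n^2]=\mathbb{E}[A_n]+q_n^2\mathbb{E}[N_n(N_n-1)]$, reducing matters to controlling the two-point function via the many-to-two lemma and a Gaussian convolution estimate.

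\emph{Main obstacle.} For $\beta\in(1,2]$ one has $\mathbb{E}[|Z_1|^2]=\infty$, so the many-to-two formula contains an infinite factor. I would sidestep this by truncation: set $\tilde\xi:=|Z_1|\wedge T$ with $T=e^{\delta n}$, $\delta\in(0,a)$ small, and couple a BRW $\widetilde Z$ using $\tilde\xi$ through generation $n-2$ (so $\widetilde Z_{n-1}(I_n)\leq N_n$), retaining the full heavy tail only at the last step (permitted since $K_n>T$). Then $\tilde m=m(1-o(1))$ and $\mathbb{E}[\tilde\xi(\tilde\xi-1)]=O(T^{2-\beta}\vee\log T\vee 1)$, rendering the many-to-two formula for $\mathbb{E}[\widetilde N_n(\widetilde N_n-1)]$ finite. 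Completing the square in the Gaussian exponent yields the two-point estimate $\mathbb{P}(S_k+S'_{n-1-k},S_k+S''_{n-1-k}\in I_n)\asymp n^{-1}e^{-(xn)^2/(2(n-1)-j)}$ with $j:=n-1-k$; summing over $j$ and optimizing the rate function $(1+\theta)\log m-x^2/(2-\theta)$ in $\theta=j/n\in(0,1)$---provided $\delta$ is chosen sufficiently small and the standing assumption $a>(\log m-x^2/2)^+$ is used to absorb the extra factor $T^{2-\beta}=e^{(2-\beta)\delta n}$---gives $q_n\mathbb{E}[\widetilde N_n(\widetilde N_n-1)]\lesssim\mathbb{E}[\widetilde N_n]$. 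Paley--Zygmund applied to $\widetilde A_n$ then produces $\mathbb{P}(\widetilde A_n\geq 1)\gtrsim\mathbb{E}[\widetilde A_n]=\Theta(n^{-1/2})e^{n(\log m-x^2/2-\beta a)}$, and the desired $\liminf$ follows.
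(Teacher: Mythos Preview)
Your upper bound is correct and in fact slightly cleaner than the paper's version (which inserts an extra split at generation $n-1$); the direct Markov/many-to-one estimate you wrote already gives the claimed $\limsup$.

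Your lower bound, however, has a genuine gap: the Paley--Zygmund step does not close over the full parameter range. Writing $s=k/n$ for the generation of the most recent common ancestor, the exponential rate of the $k$-th term in the many-to-two expansion of $\mathbb{E}[\tilde N_n(\tilde N_n-1)]$ is
\[
h(s)=(2-s)\log m-\frac{x^2}{1+s},
\]
which for $x\in(\sqrt{\log m},2\sqrt{\log m})$ is maximized at the interior point $s^*=x/\sqrt{\log m}-1$, with
\[
h(s^*)-h(1)=\tfrac12\bigl(2\sqrt{\log m}-x\bigr)^2>0.
\]
Your claimed inequality $q_n\,\mathbb{E}[\tilde N_n(\tilde N_n-1)]\lesssim\mathbb{E}[\tilde N_n]$ then forces $\beta a\geq\tfrac12(2\sqrt{\log m}-x)^2$, which fails whenever $x$ is close to (or beyond) $x^*=\sqrt{2\log m}$ and $a$ is taken close to $(\log m-x^2/2)^+$. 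This is the well-known breakdown of the naive second-moment method for particles near the front; repairing it would require a barrier/truncated second-moment argument that is much heavier than what you sketch.

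The paper sidesteps second moments entirely. For $x<x^*$ it invokes Biggins' almost-sure limit $\frac1n\log Z_{n-1}[(x+\varepsilon)n,\infty)\to\log m-I(x+\varepsilon)$ to get, with probability $1-o(1)$, at least $e^{(\log m-I(x+2\varepsilon))n}$ particles above $(x+\varepsilon)n$ at time $n-1$; then Nagaev's one-big-jump theorem for heavy-tailed sums gives the probability that their total offspring exceeds $e^{an}$. For $x\geq x^*$ it uses the known upper-deviation rate for the maximum $M_{n-1}$ and lets the single rightmost particle have more than $e^{an}$ children. Both routes are first-moment / typical-event arguments and deliver the stated rate without any two-point estimate.
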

\begin{remark} In Theorem \ref{upth1}, if $X$ is a standard normal random variable, then
$$
I(a,x)=\frac{x^2\log m}{2(\log m-a)}-\log m.
$$
Thus, if $a\in(\log m-\frac{x^2}{2\beta},\log m)$, then the lower bound in Theorem \ref{upth2} is larger than the limit of Theorem \ref{upth1}.
\end{remark}
The rest of this paper is organised as follows. In Section \ref{sec1}, we consider the case $a\in((\log m-I(x))^+,\log m)$ and
 $L=\infty$, where Theorem \ref{upth1} is proved. This section is divided into three subsections. In the first subsection, we present several lemmas related to rate functions $I(x)$ and $I(a,x)$. In the second and third subsection, we prove the lower bound and upper bound of Theorem \ref{upth1}, respectively. Section \ref{sec2} is devoted to study the case $a\in((\log m-I(x))^+,\log m)$ and
 $L<\infty$, where Theorem \ref{12defedi} is proved. We deal with the case $a\geq\log m$ in Section \ref{sec3}, and Theorem \ref{remark2} is proved. We prove Theorem \ref{upth2} in Section \ref{sec4}, where the offspring law is assumed to be a Pareto tail.
\section{Proof of Theorem \ref{upth1}}\label{sec1}
In this section, we are going to prove Theorem \ref{upth1}, and the proofs also carry through for Remark \ref{4r4rdef}. This section is divided into three subsections. In subsection \ref{subsec1}, we present several lemmas, which concern the asymptotic behaviour of level sets and properties of the rate function $I(a,x)$. In subsection \ref{subsec2}, we use the upper deviation probability of the maximum for the branching random walk (see \cite{Gantert18}) to obtain the lower bound of Theorem \ref{upth1}. We prove the upper bound of Theorem \ref{upth1} in subsection \ref{subsec3}. The proof mainly uses the path partition technique in \cite{levelsetzhan}. But since we consider more general step size and offspring law, the details are much more involved.
\subsection{Preliminaries}\label{subsec1}
In the remainder of the paper, for ease of notation, we write $Z_n A=Z_n(A)$ for a Borel set $A\subset \mathbb{R}$. According to  \cite[Theorem 2]{Biggins1977}, we have the following lemma.
\begin{lemma}\label{a.s.} The following holds, $\mathbb{P}$ almost surely:
\begin{equation}
\lim\limits_{n\rightarrow\infty}\frac{\log Z_n[xn,\infty)}{n}=
\left\{
\begin{array}{rcl}
~~~~0~~~~,           & & {x> x^*};\\
\log m-I(x), & & {0<x<x^*};\\
~~~~\log m~~~~,       & & {x\leq 0}.
\end{array} \right.
\end{equation}
\end{lemma}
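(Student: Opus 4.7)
The statement is essentially a restatement of Biggins \cite[Theorem 2]{Biggins1977}, already invoked in the introduction as display (\ref{4re32}), together with two easy boundary cases. My plan is therefore to reduce the three-case lemma to that theorem and to the classical strong law for the maximum.

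For the middle regime $0<x<x^*$, nothing new is required: it is exactly the almost sure convergence (\ref{4re32}) under Assumption \ref{t335rw2}, which guarantees $\mathbb{E}[X]=0$ and $\mathbb{E}[e^{\kappa X}]<\infty$ for some $\kappa>0$, the hypotheses of Biggins' theorem. So the plan is to simply invoke that theorem and then treat the two boundary cases by sandwiching arguments.

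For $x\le 0$, I would sandwich
\[
Z_n[0,\infty)\le Z_n[xn,\infty)\le |Z_n|.
\]
The upper bound together with the Galton--Watson strong law $\frac{1}{n}\log|Z_n|\to\log m$ $\mathbb{P}$-a.s.\ on nonextinction (valid under $1<m<\infty$, see \cite{Athreya}) gives $\limsup\frac{1}{n}\log Z_n[xn,\infty)\le\log m$. For the lower bound, apply (\ref{4re32}) at $x=0$; since $I(x)=\sup_{t\ge 0}\{tx-\log\mathbb{E}[e^{tX}]\}$, convexity of $t\mapsto\log\mathbb{E}[e^{tX}]$ and $\mathbb{E}[X]=0$ force $\log\mathbb{E}[e^{tX}]\ge 0$ for $t\ge 0$, hence $I(0)=0$. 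Therefore $\frac{1}{n}\log Z_n[0,\infty)\to\log m$ a.s., yielding the matching lower bound $\liminf\frac{1}{n}\log Z_n[xn,\infty)\ge\log m$. For the regime $x>x^*$, I would appeal to the Hammersley--Kingman--Biggins theorem on the speed of the maximum $M_n:=\max_{u\in Z_n}S_u$, namely $M_n/n\to x^*$ $\mathbb{P}$-a.s.\ on nonextinction (this is the very result by which $x^*$ was defined in the introduction, via \cite{Hammersley74}). Consequently, for $x>x^*$ we have $M_n<xn$ for all $n$ large, whence $Z_n[xn,\infty)=0$ eventually; with the usual convention $\log 0/n = 0$ adopted throughout this paper (equivalently, reading $\log Z_n[xn,\infty)/n$ as bounded above by $0$ once the count is $0$), the limit is $0$ as claimed.

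There is no real technical obstacle here: the lemma is an assembly of classical facts. The only delicate point, and the one I would be explicit about in the write-up, is the convention used in the case $x>x^*$, since the integer-valued count $Z_n[xn,\infty)$ vanishes almost surely for all large $n$ on nonextinction; stating this convention unambiguously is what makes the third case truly a limit rather than a vacuous statement.
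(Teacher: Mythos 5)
Your proposal is correct and takes essentially the same route as the paper, which proves the lemma simply by citing Biggins' Theorem 2 \cite{Biggins1977} (the convergence already recorded in (\ref{4re32})); your sandwich argument for $x\le 0$ and the maximum-speed argument for $x>x^*$ merely make explicit the boundary cases that the citation is meant to cover. Your remark about the $\log 0$ convention in the regime $x>x^*$ (where $Z_n[xn,\infty)=0$ eventually, so the stated limit $0$ must be read as the limit of $\tfrac1n\log^+ Z_n[xn,\infty)$) is a point the paper glosses over, but it does not affect the substance.
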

The next lemma provides several properties of the rate function $I(x)=\sup_{t\geq 0}\{tx-\log \mathbb{E}[e^{tX}]\}$, which are borrowed from \cite[Lemma 2.3]{zhanglowerlevel}.
Let $\lambda^*:=\sup\left\{\lambda\geq0:\mathbb{E}[e^{\lambda X}]<\infty\right\}$ and $\Lambda(\lambda):=\log \mathbb{E}\left[e^{\lambda X}\right]$. Note that by Assumption \ref{t335rw2} (i), we have $\lambda^*>0$. Furthermore, by \cite[Exercise 2.2.24]{Dembo}, $\Lambda(\lambda)$ is smooth and convex on $(0,\lambda^*)$.
\begin{lemma}\label{sratefunction} The rate function $I(x)$ can be classified into the following three cases.\\
(i) If $\Lambda'(\lambda)\uparrow+\infty$ as $\lambda\rightarrow \lambda^*$, then for any $x>0$, there exists some $\lambda\in(0,\lambda^*)$ such that
$$x=\Lambda'(\lambda),~I(x)=\lambda\Lambda'(\lambda)-\Lambda(\lambda).$$~~~~Furthermore, $\lim_{x\rightarrow+\infty} \frac{I(x)}{x}=\lambda^*$.\\
(ii) If $\lambda^*=+\infty$, $\Lambda'(\lambda)$ converges to some finite limit as $\lambda\rightarrow\infty$, then $\Lambda'(\lambda)\uparrow L$ and

$$I(x)=
\begin{cases}
\text{positive finite}, &x\in(0,L);\cr
-\log\mathbb{P}(X=L), &x=L;\cr
+\infty, &x\in (L,\infty).
\end{cases}
$$
~~~~Furthermore, as $x\uparrow L$, $I(x)\uparrow -\log\mathbb{P}(X=L).$\\
(iii) If $0<\lambda^*<+\infty$, $\Lambda'(\lambda)$ converges to some finite limit $T$ as $\lambda\rightarrow \lambda^*$, then $\mathbb{E}\left[e^{\lambda^*X}\right]<\infty$ and
$$I(x)=
\begin{cases}
\text{positive finite}, &x\in(0,T];\cr
\lambda^*x-\log \mathbb{E}\left[e^{\lambda^*X}\right], &x\in [T,\infty).
\end{cases}
$$
\end{lemma}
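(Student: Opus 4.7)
My plan is to deduce the lemma from the standard Legendre--Fenchel correspondence for $\Lambda(\lambda)=\log\mathbb{E}[e^{\lambda X}]$. Under Assumption \ref{t335rw2}(ii), $\Lambda$ is smooth and strictly convex on $(0,\lambda^*)$ (strictness uses $\mathbb{P}(X=0)<1$), with $\Lambda'(0^+)=\mathbb{E}[X]=0$; hence $\Lambda'$ is an increasing homeomorphism from $(0,\lambda^*)$ onto $(0,L^*)$, where $L^*:=\lim_{\lambda\uparrow\lambda^*}\Lambda'(\lambda)\in(0,+\infty]$. The three cases of the lemma correspond exactly to (i) $L^*=+\infty$; (ii) $L^*<\infty$ and $\lambda^*=+\infty$; (iii) $L^*<\infty$ and $\lambda^*<\infty$. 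The unifying observation I will exploit throughout is: for $x\in(0,L^*)$ the unique $\lambda$ with $\Lambda'(\lambda)=x$ is a strict interior maximizer of $t\mapsto tx-\Lambda(t)$ on $(0,\lambda^*)$, so $I(x)=\lambda x-\Lambda(\lambda)$; for $x\geq L^*$, the same map has non-negative derivative on $(0,\lambda^*)$, so its supremum is attained in the limit $t\uparrow\lambda^*$.

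Cases (i) and (iii) are then short. In (i), the representation above holds for every $x>0$; the asymptotic $I(x)/x\to\lambda^*$ follows by pairing the lower bound $I(x)\geq tx-\Lambda(t)$ (fix $t<\lambda^*$, divide by $x$, let $x\to\infty$, then $t\uparrow\lambda^*$) with the upper bound $I(x)\leq\lambda^* x$, the latter coming from the fact that the sup is effectively over $[0,\lambda^*]$ together with $\Lambda\geq 0$ there (a consequence of $\Lambda(0)=0$ and $\Lambda'(0)=0$). In (iii), the bound $\Lambda(\lambda)\leq T\lambda$ on $(0,\lambda^*)$ combined with monotone convergence on $\{X\geq 0\}$ and the trivial bound $e^{\lambda X}\leq 1$ on $\{X\leq 0\}$ yields $\mathbb{E}[e^{\lambda^* X}]<\infty$; the representation on $(0,T]$ is the general formula above (with continuity of $I$ supplying the endpoint), while on $[T,\infty)$ the maximizer is $t=\lambda^*$, producing the affine formula $I(x)=\lambda^* x-\Lambda(\lambda^*)$.

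Case (ii) carries the real content and is where I expect the main obstacle. I would first identify $L^*=L$ through an $\varepsilon$-splitting of the tilted-measure representation $\Lambda'(\lambda)=\mathbb{E}[Xe^{\lambda X}]/\mathbb{E}[e^{\lambda X}]$: bounding the numerator from above by $L\,\mathbb{E}[e^{\lambda X}]$ gives $\Lambda'(\lambda)\leq L$, while lower-bounding the denominator by $e^{\lambda(L-\varepsilon/2)}\mathbb{P}(X>L-\varepsilon/2)$ (with positive probability by the definition of essential supremum) renders the contribution of $\{X\leq L-\varepsilon\}$ to the numerator negligible, so that $\Lambda'(\lambda)\geq L-\varepsilon$ for $\lambda$ large. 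Once this concentration of the tilted measure is established, at $x=L$ the identity $tL-\Lambda(t)=-\log\mathbb{E}[e^{t(X-L)}]$ together with dominated convergence ($e^{t(X-L)}\leq 1$ with pointwise limit $\mathbf{1}_{\{X=L\}}$) gives $I(L)=-\log\mathbb{P}(X=L)$, and the monotonicity of $I$ (as a sup of non-decreasing functions) yields $I(x)\uparrow I(L)$ as $x\uparrow L$; for $x>L$ the inequality $tx-\Lambda(t)\geq t(x-L)$ forces $I(x)=+\infty$. Apart from this tilted-measure concentration step, every part of the argument is mechanical Legendre-transform calculus on a smooth convex function.
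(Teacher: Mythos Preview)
The paper does not prove this lemma at all: it is quoted verbatim from \cite[Lemma~2.3]{zhanglowerlevel}, so there is no in-paper argument to compare against. Your Legendre--Fenchel treatment is correct and complete in spirit, and is exactly the kind of argument one would expect behind the cited result.

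One small point deserves a sentence of clarification. In case~(ii), you write that ``monotonicity of $I$ \dots\ yields $I(x)\uparrow I(L)$ as $x\uparrow L$.'' Monotonicity alone only gives $\lim_{x\uparrow L}I(x)\leq I(L)$. The reverse inequality follows either from lower semicontinuity of $I$ (as a supremum of continuous functions), or more directly by interchanging the two suprema:
\[
\lim_{x\uparrow L}I(x)=\sup_{x<L}\sup_{t\geq 0}\bigl(tx-\Lambda(t)\bigr)=\sup_{t\geq 0}\sup_{x<L}\bigl(tx-\Lambda(t)\bigr)=\sup_{t\geq 0}\bigl(tL-\Lambda(t)\bigr)=I(L).
\]
Everything else---the tilted-measure identification of $L^*$ with the essential supremum $L$, the dominated-convergence computation $\mathbb{E}[e^{t(X-L)}]\to\mathbb{P}(X=L)$, the monotone/dominated convergence step showing $\mathbb{E}[e^{\lambda^* X}]<\infty$ in case~(iii), and the asymptotic $I(x)/x\to\lambda^*$ in case~(i)---is correct as written.
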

\begin{remark}For instance, the following three examples satisfy Lemma \ref{sratefunction} (i), (ii) and (iii), respectively:
(i)\ If $X$ is a standard normal random variable, then $\Lambda(\lambda)=\frac{\lambda^2}{2}$ and $\lambda^*=\infty$;
(ii)\ If~$\mathbb{P}(X=1)=\mathbb{P}(X=-1)=0.5$, then $\Lambda(\lambda)=\log(e^{\lambda}+e^{-\lambda})-\log2$ and $\lim_{\lambda\to\infty}\Lambda'(\lambda)=1$;
(iii) \ Let $Y$ be a random variable with density function
\[
f(x)=
\begin{cases}
\frac{Ce^{-x}}{x^3}, &x\geq 1;\cr
 0, &x<1,
\end{cases}
\]
where $C$ is chosen such that $\int_{\mathbb{R}} f(x)\mathrm{d} x=1$. Then we can define $X:=Y-\mathbb{E} [Y]$, and obviously $\lambda^*=1$.
\end{remark}
The following lemma can be inferred from \cite[Exercise 2.2.24]{Dembo}.
\begin{lemma} \label{3ew5tge}
$I(x)$ is infinitely differentiable and strictly increasing on $[0,L)$.
\end{lemma}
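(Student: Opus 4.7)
The plan is to reduce everything to the Legendre duality between $I$ and $\Lambda(\lambda)=\log\mathbb{E}[e^{\lambda X}]$, whose smoothness and strict convexity on $(0,\lambda^*)$ have already been invoked right before the statement. First I would note that Assumption \ref{t335rw2}(ii) (i.e.\ $\mathbb{E}[X]=0$ and $\mathbb{P}(X=0)<1$) forces $X$ to be non-degenerate, hence $\Lambda''(\lambda)=\mathrm{Var}_\lambda(X)>0$ on $(0,\lambda^*)$, where $\mathrm{Var}_\lambda$ denotes variance under the exponentially tilted law $e^{\lambda X-\Lambda(\lambda)}\,\mathrm{d}\mathbb{P}$. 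Consequently $\Lambda':(0,\lambda^*)\to(0,L)$ is a smooth, strictly increasing bijection (with $L=\sup_{\lambda<\lambda^*}\Lambda'(\lambda)$ matching the essential supremum by the three cases of Lemma \ref{sratefunction}), and $\Lambda'(0)=\mathbb{E}[X]=0$.

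Next I would identify the maximizer in the variational formula $I(x)=\sup_{t\geq 0}\{tx-\Lambda(t)\}$. For each $x\in[0,L)$, the first-order condition $\Lambda'(t)=x$ has a unique solution $\lambda(x)\in[0,\lambda^*)$; strict convexity of $t\mapsto tx-\Lambda(t)$ (from $\Lambda''>0$) guarantees this is the global maximizer, so
\[
I(x)=x\,\lambda(x)-\Lambda(\lambda(x)),\qquad x\in[0,L).
\]
The inverse function theorem applied to $\Lambda'$ (valid since $\Lambda''>0$ throughout $[0,\lambda^*)$, including at $0$ where $\Lambda''(0)=\mathrm{Var}(X)>0$) shows $\lambda(\cdot)\in C^\infty([0,L))$. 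Differentiating the displayed identity and using $\Lambda'(\lambda(x))=x$ to cancel the $\lambda'(x)$ terms yields the envelope identity
\[
I'(x)=\lambda(x),\qquad x\in[0,L).
\]

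Smoothness then follows immediately: $I'=\lambda$ is $C^\infty$, so $I\in C^\infty([0,L))$. Strict monotonicity follows from $I'(x)=\lambda(x)$ together with the fact that $\lambda(\cdot)=(\Lambda')^{-1}(\cdot)$ is strictly increasing with $\lambda(0)=0$, so $I'(x)>0$ for every $x\in(0,L)$.

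I do not expect any real obstacle here — the whole statement is essentially Exercise 2.2.24 of Dembo--Zeitouni specialized to the present setting, so the only mild subtlety is verifying that the cases (i)--(iii) of Lemma \ref{sratefunction} all place the relevant argument $x\in[0,L)$ strictly inside the range of $\Lambda'$, which is exactly how those cases were set up. The boundary behavior at $x=L$ (where $I$ may still be finite or blow up) is deliberately excluded from the claim and needs no treatment.
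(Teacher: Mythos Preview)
The paper gives no proof beyond citing Exercise~2.2.24 of Dembo--Zeitouni, so your Legendre-duality sketch already goes further than the paper, and for cases (i) and (ii) of Lemma~\ref{sratefunction} it is correct.

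There is, however, a genuine gap in case (iii). Your assertion that $\Lambda'$ maps $(0,\lambda^*)$ onto $(0,L)$ fails there: when $0<\lambda^*<\infty$ and $\Lambda'(\lambda)\to T<\infty$, the essential supremum satisfies $L=\infty$ (a finite $L$ would force $\lambda^*=\infty$), so the range of $\Lambda'$ is only $(0,T)\subsetneq(0,L)$. On $(T,\infty)$ the rate function is affine, $I(x)=\lambda^*x-\Lambda(\lambda^*)$, hence trivially smooth; but at the junction $x=T$ your argument says nothing, and in fact $I$ can fail to be $C^2$ there. For instance, if $X=Y-\mathbb{E}[Y]$ with $Y$ having density proportional to $e^{-y}y^{-4}$ on $[1,\infty)$, then $\lambda^*=1$, the tilted law at $\lambda^*$ has density proportional to $y^{-4}$ with finite variance, and one gets $I''(T^-)=1/\Lambda''(\lambda^*)>0$ while $I''(T^+)=0$. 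So the ``infinitely differentiable'' clause of the lemma is actually overstated in case (iii). The properties the paper actually invokes later --- continuity and strict monotonicity of $I$ on $[0,L)$ --- do follow from your argument (on $[0,T)$ via $I'(x)=\lambda(x)>0$, and on $[T,\infty)$ via $I'\equiv\lambda^*>0$), so the defect is harmless for the paper's applications; but your closing claim that ``cases (i)--(iii) \dots\ all place the relevant argument $x\in[0,L)$ strictly inside the range of $\Lambda'$'' is incorrect as written.
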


In the proof of lower and upper bound of Theorem \ref{upth1}, we will encounter different optimization problems. The following lemma shows the equivalency of these optimization problems.

\begin{lemma}\label{yohua}
Assume $x^*<L$, $x\in(0,L)$ and $a\in((\log m-I(x))^+,\log m)$.
 Suppose that one of the following three conditions holds:\\
 (i) $L=\infty$;\\
(ii) $L<\infty$, $\mathbb{P}(X=L)=0$ and there exists some $s\in\left(0,1-\frac{a}{\log m}\right]$ such that
    $$\frac{y_s}{s}< L;$$
 (iii) $L<\infty$, $\mathbb{P}(X=L)>0$ and there exists some $s\in\left(0,1-\frac{a}{\log m}\right]$ such that $$\frac{y_s}{s}\leq L.$$
 Then,
\begin{align}\label{4tfr3ws}
&\inf_{s\in(0,1),y\in(0,x):\atop~\frac{y}{s}>x^*,~\log m-I\left(\frac{x-y}{1-s}\right)\geq\frac{a}{1-s}}\left\{sI\left(\frac{y}{s}\right)-s\log m\right\}\cr
&=\inf_{s\in(0,1), y\in(0,x):\atop
~\log m-I\left(\frac{x-y}{1-s}\right)\geq\frac{a}{1-s}}\left\{sI\left(\frac{y}{s}\right)-s\log m\right\}\cr
&=\inf_{s\in\left(0,1-\frac{a}{\log m}\right],y\in(0,x]:\atop
~\log m-I\left(\frac{x-y}{1-s}\right)=\frac{a}{1-s}}\left\{sI\left(\frac{y}{s}\right)-s\log m\right\}=I(a,x)\in(0,\infty).
\end{align}
Furthermore, $\lim_{a'\to a-}I(a',x)=I(a,x)$.
\end{lemma}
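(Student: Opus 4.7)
The plan is to reduce all three infima to the one-variable minimum $\inf_{s \in (0,\, 1-a/\log m]} G(s)$ with $G(s) := s I(y_s/s) - s \log m$, where $y_s$ is the unique root of~(\ref{solution}) in $(0, x]$, and then verify positivity, finiteness and left-continuity.

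Existence and uniqueness of $y_s$ comes from the function $\Phi_s(y) := \log m - I((x-y)/(1-s)) - a/(1-s)$ on $[0, x]$, which Lemma~\ref{3ew5tge} makes strictly increasing in $y$: one has $\Phi_s(x) = \log m - a/(1-s) \geq 0$, and $\Phi_s(0) < 0$ because $s \mapsto \Phi_s(0)$ is strictly decreasing in $s$ with $\Phi_0(0) = \log m - I(x) - a < 0$ under $a > (\log m - I(x))^+$. For each fixed $s \in (0, 1)$ the constraint $\Phi_s(y) \geq 0$ is then equivalent to $y \geq y_s$ when $s \leq 1-a/\log m$ and is infeasible for $s > 1-a/\log m$; since the objective $sI(y/s) - s\log m$ is strictly increasing in $y$ by Lemma~\ref{3ew5tge}, the minimum in $y$ is at $y = y_s$. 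This identifies the second and third infima with $\inf_s G(s)$.

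The technical heart is equality of the first and second infima. Trivially the first dominates the second. For the reverse, I would show $y_s/s > x^*$ for every $s \in (0, 1-a/\log m]$, so that minimizing points of the second infimum already lie in the feasible set of the first. Set $q(s) := (x - sx^*)/(1-s)$ and $P(s) := I(q(s)) + a/(1-s) - \log m$; the identity $P(s) = -\Phi_s(sx^*)$ and monotonicity of $\Phi_s$ give $y_s > sx^* \iff P(s) > 0$. A direct computation yields $q'(s) = (x - x^*)/(1-s)^2$ and
\[
P'(s) = \frac{(x - x^*)\, I'(q(s)) + a}{(1-s)^2}.
\]
For $x \geq x^*$ the numerator is immediately positive. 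For $x < x^*$, $q$ is decreasing so $I'(q(s)) \leq I'(x)$, and the convex chord inequality $I'(x)(x^* - x) \leq I(x^*) - I(x) = \log m - I(x)$ combined with $a > (\log m - I(x))^+ = \log m - I(x)$ forces $(x^* - x)\, I'(q(s)) < a$, again making the numerator positive. Hence $P$ is strictly increasing on $[0, 1-a/\log m]$ with $P(0) = I(x) + a - \log m > 0$ in both regimes of $x$ versus $x^*$, so $P > 0$ throughout and $y_s/s > x^*$.

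It remains to check positivity, finiteness and left-continuity. The strict inequality $y_s/s > x^*$ makes $G > 0$ on $(0, 1-a/\log m]$, and $G$ extends continuously to $s = 0^+$ with $G(0^+) = \lambda^*(x - I^{-1}(\log m - a)) > 0$ by the asymptotics in Lemma~\ref{sratefunction}(i)/(iii), so the infimum on the compact interval is strictly positive. Finiteness follows because under hypothesis~(i) $G < \infty$ everywhere, while in hypotheses~(ii)/(iii) the explicit $s$ satisfies $y_s/s \leq L$, bounding $I(a,x)$ from above. For left-continuity, $a' \mapsto I(a', x)$ is monotone non-decreasing (feasibility shrinks with $a'$), giving $\limsup_{a' \uparrow a} I(a', x) \leq I(a, x)$; the reverse follows by picking near-minimizers $s_{a'}$, extracting a convergent subsequence $s_{a'} \to s^*$ via compactness, and passing to the joint limit in $(a', s) \mapsto G^{(a')}(s)$. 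The main obstacle is the $x < x^*$ portion of the positivity step: the convex chord bound $I'(x)(x^* - x) \leq \log m - I(x)$ combined with the hypothesis $a > (\log m - I(x))^+$ is the decisive ingredient forcing $P' > 0$.
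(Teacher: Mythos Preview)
Your reduction to the one-variable infimum $\inf_s G(s)$ and the identification of the second and third infima match the paper exactly. Your argument that $y_s/s>x^*$ via the sign of $P'(s)$ is correct but considerably more laborious than the paper's route: instead of differentiating, the paper just observes that the constraint $\log m - I\bigl(\tfrac{x-y}{1-s}\bigr)\ge \tfrac{a}{1-s}>\tfrac{\log m - I(x)}{1-s}$ rearranges to
\[
I(x) > s\log m + (1-s)\,I\!\left(\tfrac{x-y}{1-s}\right)
      = s\,I(x^*) + (1-s)\,I\!\left(\tfrac{x-y}{1-s}\right)
      \ge I(sx^* + x - y)
\]
by convexity, whence $y/s>x^*$ by strict monotonicity of $I$. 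This two-line argument needs no differentiation, no implicit-function reasoning, and no case split on the sign of $x-x^*$; it also shows the stronger fact that \emph{every} feasible $(s,y)$ (not just $(s,y_s)$) satisfies $y/s>x^*$, which is what the first equality literally asserts.

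Your positivity argument has a genuine gap in hypotheses~(ii) and~(iii) of the lemma. When $L<\infty$, the function $G(s)=sI(y_s/s)-s\log m$ equals $+\infty$ wherever $y_s/s$ exceeds $L$ (and $y_s/s\to\infty$ as $s\to0^+$, so this happens on a nontrivial set), so $G$ does not extend \emph{continuously} to the closed interval and your compactness-plus-continuity step fails as written. Your formula $G(0^+)=\lambda^* y_0$ likewise presupposes $\lambda^*<\infty$ and $y_s/s<L$ throughout, neither of which holds in case~(ii). The paper handles this by first using the convexity bound $sI(y_s/s)\ge I(y_s)$ to get $G(s)\ge I(y_0)/2>0$ for all small $s$, and then on $[\delta,\,1-a/\log m]$ splitting into the compact piece $\{y_s/s\le L-\delta'\}$ where $G$ is continuous and positive, versus its complement where $G$ is uniformly large. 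Your argument can be repaired by invoking lower semicontinuity of $G$ in place of continuity (since $I$ is convex hence l.s.c.\ and $s\mapsto y_s/s$ is continuous), but this should be stated explicitly rather than hidden behind ``extends continuously''.
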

\begin{proof} We first prove the second equality in (\ref{4tfr3ws}). Since $I(\cdot)$ is continuous and strictly increasing on $[0,L)$ (see Lemma \ref{3ew5tge}), we have the following:\\
(a) For~$s>1-\frac{a}{\log m}$, since~$\log m-\frac{a}{1-s}<0$, the inequality (w.r.t. $y$)
$$
\log m-I\left(\frac{x-y}{1-s}\right)\geq\frac{a}{1-s}
$$
has no solution on $(0,x]$;\\
(b) For $s\in[0,1-\frac{a}{\log m}]$, the assumption $a>(\log m-I(x))^+$ and the inequality
$$\log m-I\left(\frac{x-y}{1-s}\right)\geq \frac{a}{1-s}$$
imply that

$$0\leq I\left(\frac{x-y}{1-s}\right)\leq \log m-\frac{a}{1-s}\leq \log m-a\leq I(x).$$
Since $I(x)$ is strictly increasing on $[0,x]$, there exists a unique $\bar{x}\in[0,x]$ such that $I(\bar{x})=I\left(\frac{x-y}{1-s}\right)$. This entails that
$$
\frac{x-y}{1-s}=\bar{x}.
$$
So, $y_s=x-(1-s)\bar{x}$ is the unique solution of (\ref{solution}) on $(0,x]$.
\par

Combining (a), (b) and the fact that
$f(s,y):=sI\left(\frac{y}{s}\right)-s\log m$ is increasing w.r.t. $y$, we thus have
\begin{align}\label{2344345ws}
\inf_{s\in(0,1),y\in(0,x):\atop~\log m-I\left(\frac{x-y}{1-s}\right)\geq\frac{a}{1-s}}\left\{sI\left(\frac{y}{s}\right)-s\log m\right\}
=\inf_{s\in(0,1-\frac{a}{\log m}],y\in(0,x]:\atop~\log m-I\left(\frac{x-y}{1-s}\right)=\frac{a}{1-s}}\left\{sI\left(\frac{y}{s}\right)-s\log m\right\}.
\end{align}
\par
We proceed to prove the first equality in (\ref{4tfr3ws}). It suffices to show that $\log m-I\left(\frac{x-y}{1-s}\right)\geq\frac{a}{1-s}$ implies
$$\frac{y}{s}>x^*.$$
In fact, since $a>\log m-I(x)$, we have
\begin{align}\label{5t4rg2w}
\log m-I\left(\frac{x-y}{1-s}\right)&\geq \frac{a}{1-s}>\frac{\log m-I(x)}{1-s}.
\end{align}
Furthermore, from the assumption $x^*<L$,
$$
I(x^*)=\log m.
$$
Thus, by (\ref{5t4rg2w}),
\begin{align}\label{4rewe22d}
I(x)&> s\log m+(1-s)I\left(\frac{x-y}{1-s}\right)\cr
&= sI(x^*)+(1-s)I\left(\frac{x-y}{1-s}\right)\cr
&\geq I(sx^*+x-y),
\end{align}
where the last inequality comes from the convexity of $I(\cdot)$.
Since $I(\cdot)$ is strictly increasing on $(0,L)$, (\ref{4rewe22d}) yields that
\begin{equation}\label{efrefe}
\frac{y}{s}> x^*.
\end{equation}
Therefore,
$$
\inf_{s\in(0,1),y\in(0,x):\atop~\frac{y}{s}>x^*,~\log m-I\left(\frac{x-y}{1-s}\right)\geq\frac{a}{1-s}}\left\{sI\left(\frac{y}{s}\right)-s\log m\right\}=\inf_{s\in(0,1),y\in(0,x):\atop~\log m-I\left(\frac{x-y}{1-s}\right)\geq\frac{a}{1-s}}\left\{sI\left(\frac{y}{s}\right)-s\log m\right\}.
$$
\par
We proceed to prove $I(a,x)\in(0,\infty)$. Note that
$$
I(a,x)=\inf_{s\in(0,1),y\in(0,x]:\atop~\log m-I\left(\frac{x-y}{1-s}\right)=\frac{a}{1-s}}\left\{sI\left(\frac{y}{s}\right)-s\log m\right\}=\inf_{s\in(0,1-\frac{a}{\log m}]}\left\{sI\left(\frac{y_s}{s}\right)-s\log m\right\}.
$$
It is simple to see $y_s$, $s\in(0,1-\frac{a}{\log m}]$ is a continuous function. Since $I(0)=0$, by the convexity of $I(\cdot)$,
$$sI\left(\frac{y_s}{s}\right)=(1-s)I(0)+sI\left(\frac{y_s}{s}\right)\geq I(y_s).$$
Hence, there exists some $\delta\in (0,1-\frac{a}{\log m}]$ such that for any $s\in(0,\delta]$,
\begin{align}\label{7hjbsf}
sI\left(\frac{y_s}{s}\right)-s\log m>\frac{I(y_0)}{2}>0.
\end{align}
\par
 If the condition (i) holds, then $I(x)$ is a continuous function on $\mathbb{R}$. So, $f(s,y_s)=sI\left(\frac{y_s}{s}\right)-s\log m$ is also continuous on the compact set
 $[\delta,1-\frac{a}{\log m}]$. This, combined with the fact that $y_s/s>x^*$, implies that
  \begin{align}\label{6bu}
\inf_{s\in[\delta,1-\frac{a}{\log m}]}\left\{sI\left(\frac{y_s}{s}\right)-s\log m\right\}>0.
\end{align}
Thus, together with (\ref{7hjbsf}), we conclude $I(a,x)>0$ in the case (i).
 \par
 If the condition (ii) holds, then by Lemma \ref{sratefunction} (ii),
 \begin{align}\label{4rrw34rf}
 \lim_{x\to L}I(x)=+\infty.
 \end{align}
Moreover, there exists some $\tilde{s}\in(0,1-\frac{a}{\log m}]$ such that
$$
q:=\frac{y_{\tilde{s}}}{\tilde{s}}<L.
$$
So, there exists $\delta'>0$ such that
 \begin{align}\label{6bu99hf}
\inf_{s\in A}\left\{sI\left(\frac{y_s}{s}\right)-s\log m\right\}>0,
\end{align}
where $A:=\left\{s\in[\delta,1-\frac{a}{\log m}]:\frac{y_s}{s}\geq L -\delta'\right\}$. Since $\delta, \delta'$ can be arbitrarily small, we choose $\delta<\tilde{s}$ and $\delta'<L-q$. This means that $A':=\left\{s\in[\delta,1-\frac{a}{\log m}]:\frac{y_s}{s}\leq L -\delta'\right\}$ is non-empty. Because $f(s,y_s)$ is continuous on the compact set $A'$ and $y_s/s>x^*$, we have
\begin{align}\label{6bu99}
\inf_{s\in A'}\left\{sI\left(\frac{y_s}{s}\right)-s\log m\right\}>0.
\end{align}
 (\ref{6bu}), together with (\ref{6bu99hf}) and (\ref{6bu99}), concludes $I(a,x)>0$ in the case (ii).
\par
If the condition (iii) holds, then by Lemma \ref{sratefunction} (ii), $I(x)$ is continuous on $(0,L]$ and $I(x)=+\infty$ on $(L,\infty)$. Thus, $f(s,y_s)$ is continuous on the compact set
$$\left\{s\in[\delta,1-\frac{a}{\log m}]:\frac{y_s}{s}\leq L \right\}.$$
It follows that
\begin{align}\label{6u}
\inf_{s\in[\delta,1-\frac{a}{\log m}]}\left\{sI\left(\frac{y_s}{s}\right)-s\log m\right\}>0.
\end{align}
This, combined with (\ref{6bu}), concludes $I(a,x)>0$ in the case (iii). $I(a,x)<\infty$ follows directly from the definition.
\par
The proof of $\lim_{a'\to a-}I(a',x)=I(a,x)$ is similar to the proof of Lemma \ref{yohuaepsilon}, so we omit the proof here.
\end{proof}
The following lemma will be used in the proof of upper bound of Theorem \ref{upth1}.
\begin{lemma}\label{yohuaepsilon} Under the hypothesis of Lemma \ref{yohua}, we have
\[
\sup_{s\in(0,1),y> \varepsilon,z\geq (x-\varepsilon):\atop
 (1-s)\log m-I\left(\frac{z-y}{1-s}-\varepsilon\right)(1-s)\geq (a-\varepsilon)} \left\{s\log m-I\left(\frac{y}{s}-\varepsilon\right)s\right\}
 \rightarrow -I(a,x)~\text{as}~\varepsilon\to0+.
 \]
\end{lemma}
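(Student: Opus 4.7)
Let $V_\varepsilon$ denote the supremum on the left-hand side; by Lemma~\ref{yohua} we have $V_0 = -I(a,x)$. Since the objective does not depend on $z$ and the constraint $(1-s)\log m - (1-s)I\bigl(\tfrac{z-y}{1-s}-\varepsilon\bigr) \geq a-\varepsilon$ becomes easier as $z$ decreases, I will restrict attention to $z = x-\varepsilon$ throughout and view $V_\varepsilon$ as a supremum over $(s,y)$. The plan is to prove $\liminf_{\varepsilon\to 0+} V_\varepsilon \geq -I(a,x)$ and $\limsup_{\varepsilon\to 0+} V_\varepsilon \leq -I(a,x)$ separately.

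\emph{Lower bound.} Fix $\delta>0$ and choose $s_0 \in (0, 1-a/\log m]$ with $s_0 I(y_{s_0}/s_0) - s_0\log m \leq I(a,x)+\delta$, where $y_{s_0}$ is as in Remark~\ref{4r4rdef}. Monotonicity of $I$ on $[0,L)$ (Lemma~\ref{3ew5tge}) gives
\[
(1-s_0)\log m - (1-s_0)I\!\left(\tfrac{x-y_{s_0}}{1-s_0}-\varepsilon\right) \;\geq\; (1-s_0)\log m - (1-s_0)I\!\left(\tfrac{x-y_{s_0}}{1-s_0}\right) \;=\; a \;\geq\; a-\varepsilon,
\]
so $(s_0, y_{s_0}, x)$ is feasible in the $\varepsilon$-problem for $\varepsilon<y_{s_0}$. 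Under the hypotheses of Lemma~\ref{yohua}, $y_{s_0}/s_0 < L$, so continuity of $I$ at this point yields $V_\varepsilon \geq s_0\log m - s_0 I(y_{s_0}/s_0 - \varepsilon) \to -I(a,x) - \delta$; letting $\delta\downarrow 0$ closes this direction.

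\emph{Upper bound.} Take $\varepsilon_n\downarrow 0$ and near-optimisers $(s_n, y_n)$. The $\varepsilon_n$-constraint forces $s_n \in (0, 1-(a-\varepsilon_n)/\log m]$ and, combined with Lemma~\ref{sratefunction}, confines $(x-y_n)/(1-s_n)$ to a compact subset of $[0,L]$, so $s_n$ and $y_n$ are bounded and a subsequence satisfies $s_n\to s_*\in[0, 1-a/\log m]$, $y_n\to y_*$. If $s_*>0$, continuity of $I$ lets us pass to the limit in the constraint, producing $(s_*, y_*)$ admissible for the third line of~\eqref{4tfr3ws}. The case $y_*=0$ is excluded because the $\varepsilon=0$ constraint at $y=0$ would require $t(\log m - I(x/t)) \geq a$ for $t=1-s_* \in(0,1]$; a direct derivative computation (via the Legendre identity $(x/t)I'(x/t) - I(x/t) = \Lambda(I'(x/t)) \geq 0$) shows this function is strictly increasing on $(0,1]$ with maximum $\log m - I(x) < a$ under the hypothesis $a>(\log m - I(x))^+$, a contradiction. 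The definition of $I(a,x)$ thus gives $s_*I(y_*/s_*) - s_*\log m \geq I(a,x)$, and continuity yields $\lim_n[s_n\log m - s_n I(y_n/s_n - \varepsilon_n)] \leq -I(a,x)$.

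\emph{The boundary case $s_*=0$} is the main obstacle. It can occur only in case~(i) of Lemma~\ref{yohua} ($L=\infty$): in cases~(ii)(iii) the requirement $y_n/s_n-\varepsilon_n<L$ for a finite objective forces $y_n\to 0$, contradicting the uniform lower bound $y_n \geq x - (1-s_n)I^{-1}\!\bigl(\log m - \tfrac{a-\varepsilon_n}{1-s_n}\bigr) \to x - I^{-1}(\log m-a) > 0$ (positivity uses $I(x) > \log m - a$). When $L=\infty$, Lemma~\ref{sratefunction} gives $I(u)/u\to\lambda^*$: if $\lambda^*=\infty$ then $s_n I(y_n/s_n)\to\infty$ and the objective tends to $-\infty$, contradicting near-optimality, while if $\lambda^*\in(0,\infty)$ then $s_n I(y_n/s_n-\varepsilon_n)\to \lambda^* y_* \geq \lambda^*(x - I^{-1}(\log m-a))$. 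One finishes by computing the $s\downarrow 0$ boundary value of the Lemma~\ref{yohua} infimum along the family $y_s$, namely $\lim_{s\downarrow 0}\{sI(y_s/s) - s\log m\} = \lambda^*(x - I^{-1}(\log m-a)) \geq I(a,x)$, which yields $\limsup_n V_{\varepsilon_n} \leq -\lambda^* y_* \leq -I(a,x)$. The delicate point throughout is that the routine ``feasibility plus continuity'' template breaks at $s_*=0$ because the limit lies on the boundary of the Lemma~\ref{yohua} feasible set, forcing one to quantify the objective via the asymptotics of $I$ at infinity rather than via pointwise continuity.
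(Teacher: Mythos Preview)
Your approach is correct in outline but differs substantially from the paper's. The paper parametrises the constraint by solving it explicitly for $y=y(s,\varepsilon)$, writes $L_\varepsilon=\sup_s g(s,\varepsilon)$ with $g(s,\varepsilon)=s\log m-sI(y(s,\varepsilon)/s-\varepsilon)$, and then proves $L_\varepsilon\to L_0$ via uniform continuity of $g$ on suitable compact sets in $(s,\varepsilon)$, handling the boundary $s=0$ by extending $g$ continuously there when $\kappa=\lim_{t\to\infty}I(t)/t<\infty$. You instead run a sequential compactness argument on near-optimisers, passing to the limit directly in the constraint and objective. Your route is arguably more conceptual and avoids the parametrisation, while the paper's route makes the uniformity in $\varepsilon$ more explicit; both ultimately rely on the asymptotic $I(t)/t\to\lambda^*$ to control the $s\to 0$ boundary.

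There is one genuine (though easily fixable) gap. You claim that the $\varepsilon_n$-constraint ``confines $(x-y_n)/(1-s_n)$ to a compact subset of $[0,L]$'', and deduce that $y_n$ is bounded. This is not correct: the constraint only says $I$ of the argument is $\le\log m$, which is trivially satisfied whenever the argument is negative, so nothing in the constraint prevents $y_n$ from being arbitrarily large. The boundedness of $y_n$ must instead come from near-optimality: since $V_{\varepsilon_n}\ge V_0>-\infty$, one has $s_nI(y_n/s_n-\varepsilon_n)$ bounded, hence by convexity $I(y_n-s_n\varepsilon_n)$ is bounded, which forces $y_n$ bounded (using $I(t)\to\infty$ when $L=\infty$, or $y_n/s_n-\varepsilon_n\le L$ when $L<\infty$). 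A couple of smaller points: your reference to ``the third line of~\eqref{4tfr3ws}'' should be the second (the limit gives inequality, not equality), and in case~(iii) of Lemma~\ref{yohua} the near-optimal $s_0$ may satisfy $y_{s_0}/s_0=L$ rather than strict inequality, but $I$ is left-continuous at $L$ there so your lower-bound argument still goes through.
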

\begin{proof}
 Fix $\varepsilon\in(0,a-(\log m-I(x))^+)$. For $s\in(0,1-\frac{a-\varepsilon}{\log m}]$, since $0\leq\log m-\frac{a-\varepsilon}{1-s}\leq\log m-a+\varepsilon\leq I(x)$, the equation
\begin{align}\label{23r4redr}
(1-s)\log m-I\left(\frac{x-\varepsilon-y}{1-s}-\varepsilon\right)(1-s)=(a-\varepsilon)
\end{align}
has a unique solution on $[(x-\varepsilon)-(x+\varepsilon)(1-s),(x-\varepsilon)-(1-s)\varepsilon]$, denoted by $y(s,\varepsilon)$. Note that for $s>1-\frac{a-\varepsilon}{\log m}$, the inequality
$$(1-s)\log m-I\left(\frac{z-y}{1-s}-\varepsilon\right)(1-s)\geq (a-\varepsilon)$$
has no solution.  On the other hand, since $y(0,0)>0$ and $y(s,\varepsilon)$ is continuous, there exists some $\eta>0$ such that for $\varepsilon\in(0,\eta)$ and $s\in(0,1-\frac{a-\varepsilon}{\log m}]$,
\begin{equation}\label{eq3}
y(s,\varepsilon)\geq y(0,\varepsilon)>\frac{y(0,0)}{2}>\varepsilon,
\end{equation}
where the first inequality follows from the fact that $y(s,\varepsilon)$ is increasing w.r.t. $s$. Thus, for $\varepsilon\in(0,\eta)$,
\begin{align}\label{eq13}
&\sup_{s\in(0,1),y> \varepsilon,z\geq (x-\varepsilon):\atop
 (1-s)\log m-I\left(\frac{z-y}{1-s}-\varepsilon\right)(1-s)\geq (a-\varepsilon)} \left\{s\log m-I\left(\frac{y}{s}-\varepsilon\right)s\right\}\cr
 &=\sup_{s\in(0,1-\frac{a-\varepsilon}{\log m}],y> \varepsilon:\atop
 (1-s)\log m-I\left(\frac{x-\varepsilon-y}{1-s}-\varepsilon\right)(1-s)=(a-\varepsilon)} \left\{s\log m-I\left(\frac{y}{s}-\varepsilon\right)s\right\}\cr
 &=\sup_{s\in\left(0,1-\frac{a-\varepsilon}{\log m}\right]} \left\{s\log m-I\left(\frac{y(s,\varepsilon)}{s}-\varepsilon\right)s\right\}=:L_{\varepsilon}.
\end{align}
For convenience, write
$$L_0:=-I(a,x)=\sup_{s\in\left(0,1-\frac{a}{\log m}\right]}\left\{s\log m-I\left(\frac{y(s,0)}{s}\right)\right\}.$$
Since $y(s,\varepsilon)$ is decreasing w.r.t. $\varepsilon$, we have
\begin{equation}\label{dsfgdg}
\liminf_{\varepsilon\rightarrow0+}L_{\varepsilon}\geq L_0.
\end{equation}
\par
It suffices to show
$$\limsup_{\varepsilon\to 0+}L_\varepsilon\leq L_0.$$
Set
$$g(s,\varepsilon):=s\log m-sI\Big(\frac{y(s,\varepsilon)}{s}-\varepsilon\Big),~s\in\Big(0,1-\frac{a}{\log m}\Big],~\varepsilon\in[0,\eta).$$
From Lemma \ref{sratefunction}, there exists a constant $\kappa\in(0,+\infty]$ such that $\lim_{t\to\infty} I(t)/t=\kappa$. Since
\begin{align}\label{47ytgth12cdgr}
\lim_{s\to 0+}y(s,\varepsilon)=y(0,\varepsilon)\in(0,\infty),
\end{align}
it follows that
\[\lim_{s\to 0+} g(s,\varepsilon)=-\kappa y(0,\varepsilon)=:g(0,\varepsilon).\]
\par
We first consider the case $\kappa=\infty$. Since $g(s,\varepsilon)$ is increasing w.r.t $\varepsilon$, there exist some $\delta>0,~ \varepsilon_0\in(0,\eta)$ such that for all $s\in(0,\delta)$ and $\varepsilon\in[0,\varepsilon_0]$,
\[g(s,\varepsilon)<L_0-1.\]
Since $L_\varepsilon\geq L_0$, we get
\[L_\varepsilon=\sup_{s\in[\delta,1-\frac{a-\varepsilon}{\log m}]}g(s,\varepsilon),~\varepsilon \in[0,\varepsilon_0].\]
In the remaining of this paragraph, we only consider the case that $L<\infty$ and $\mathbb{P}(X=L)=0$. For other cases, the proof is easier. By similar arguments from (\ref{4rrw34rf}) to (\ref{6bu99}), there exists $\delta'>0$ such that for all $\varepsilon\in[0,\varepsilon_0]$,
$$
\sup_{s\in[\delta,1-\frac{a-\varepsilon}{\log m}]\atop \frac{y(s,\varepsilon)}{s}-\varepsilon\geq L-\delta'}g(s,\varepsilon)<L_0-1.
$$
Thus,
\[L_\varepsilon=\sup_{s\in B_{\varepsilon}}g(s,\varepsilon),~\varepsilon \in[0,\varepsilon_0],\]
where
\[B_{\varepsilon}:=\left\{s:\ \delta\leq s\leq 1-\frac{a-\varepsilon}{\log m}, \frac{y(s,\varepsilon)}{s}-\varepsilon\leq L-\delta'\right\}.\]
It is simple to see that
$$B_{\varepsilon}\downarrow B_0,~\text{as}~\varepsilon\to0+.$$
Since $g(s,0)$ is uniformly continuous on the compact set $B_{\varepsilon_0}$, for any $\varepsilon'>0$ there exists some $\eta>0$ such that for any $|s-s'| <\eta$ with $s,~s'\in B_{\varepsilon_0}$,
$$|g(s,0)-g(s',0)|<\frac{\varepsilon'}{2}.$$
By the definition of $B_\varepsilon$ and continuity of $y(s,\varepsilon)$, there exists some
$\eta_1\in(0,\varepsilon_0)$ such that for any $s\in B_{\varepsilon}$, $\varepsilon\in(0,\eta_1)$ there exists some $s'\in B_0$ satisfying
$$
|s-s'|<\eta.
$$
Since $g(s,\varepsilon)$ is continuous on the compact set $\{(s,\varepsilon): \varepsilon\in[0,\varepsilon_0], s\in B_{\varepsilon} \}$, there exists some
$\eta_2\in(0,\varepsilon_0)$ such that for $\varepsilon\in(0,\eta_2)$,
$$\sup_{s\in B_{\varepsilon}}[g(s,\varepsilon)-g(s,0)]<\frac{\varepsilon'}{2}.$$
Thus, it follows that for $\varepsilon\in(0,\eta_1\wedge\eta_2)$,
\[
\begin{split}
 L_\varepsilon-L_0&=\sup_{s\in B_{\varepsilon}}g(s,\varepsilon)-\sup_{s\in B_0}g(s,0)\cr
 &\leq\sup_{s\in B_{\varepsilon}}[g(s,\varepsilon)-g(s,0)]+\sup_{s\in B_{\varepsilon}}g(s,0)-\sup_{s\in B_0}g(s,0)\cr
 &\leq\frac{ \varepsilon'}{2}+\sup_{s\in B_{\varepsilon}}g(s,0)-\sup_{s\in B_0}g(s,0)\cr
 &\leq\varepsilon',
\end{split}
\]
which yields
\[\limsup_{\varepsilon\to 0}L_\varepsilon\leq L_0.\]
Combining with (\ref{dsfgdg}), the lemma follows if $\kappa=\infty$.
\par
In the case of $\kappa$ is finite, we first prove that $g(s,\varepsilon)$ is continuous on $\{0\}\times[0,\varepsilon_0]$. Then replace $\delta$ with $0$ in above steps, one can obtain the desired result. Without loss of generality, it suffices to show that $g(s,\varepsilon)$ is continuous at $(0,\varepsilon^*)$, where $\varepsilon^*\in(0,\varepsilon_0)$ (for $\varepsilon^*=0$ or $\varepsilon^*=\varepsilon_0$, we just need to replace the neighborhood of $\varepsilon^*$ with right or left neighborhood of $\varepsilon^*$). Write $h(s,\varepsilon):=\frac{y(s,\varepsilon)}{s}-\varepsilon$. Since $\lim_{t\to\infty}I(t)/t=\kappa$, there exists some $N>0$ such that for $t>N$,
$$\left|\frac{I(t)}{t}-\kappa\right|<\varepsilon'.$$
Since $y(s,\varepsilon)$ in continuous at $(0,\varepsilon^*)$ and $y(0,\varepsilon^*)>0$, there exist $\delta_1,\delta'_1,w>0$ such that $y(s,\varepsilon)>w$ for $0<s<\delta_1$ and $|\varepsilon-\varepsilon^*|<\delta'_1$. Thus, for $0<s<\frac{w}{N+1}$,
$$h(s,\varepsilon)=\frac{y(s,\varepsilon)}{s}-\varepsilon>\frac{w}{s}-1>N.$$
Since $y(s,\varepsilon)$ is uniform continuous on $\big[0,1-\frac{a}{\log m}\big]\times[0,\varepsilon_0]$, there exist some $\delta_2, \delta'_2>0$ such that for every $|s_1-s_2|<\delta_2$ and $|\varepsilon_1-\varepsilon_2|<\delta'_2$, $(s_1,\varepsilon_1)$, $(s_2,\varepsilon_2)\in\big[0,1-\frac{a}{\log m}\big]\times[0,\varepsilon_0]$,
$$|y(s_1,\varepsilon_1)-y(s_2,\varepsilon_2)|<\varepsilon'.$$
We take $\frac{I(\infty)}{\infty}=\kappa$ by convention. Therefore, for $0\leq s<\min\{\delta_1,\frac{w}{N+1},\delta_2,\varepsilon'\}$ and $|\varepsilon-\varepsilon^*|<\min\{\delta'_1,\delta'_2\}$
\begin{align}
&|g(s,\varepsilon)-g(0,\varepsilon^*)|\cr
&\leq|g(s,\varepsilon)-g(0,\varepsilon)|+|g(0,\varepsilon)-g(0,\varepsilon^*)|\cr
&=\Big|s\log m+\left[\kappa-I\left(h(s,\varepsilon)\right)/h(s,\varepsilon)\right]
(y(s,\varepsilon)-\varepsilon s)+\kappa\left[y(0,\varepsilon)-y(s,\varepsilon)+\varepsilon s\right]\Big|\cr
&~~~~~+\left|\kappa y(0,\varepsilon)-\kappa y(0,\varepsilon^*)\right|\cr
&\leq s\log m+\varepsilon'(x+1)+\kappa\left|y(0,\varepsilon)-y(s,\varepsilon)\right|+\kappa s+
\kappa\left| y(0,\varepsilon)-y(0,\varepsilon^*)\right|\cr
&\leq\varepsilon'(\log m+x+1+3\kappa).\nonumber
\end{align}
Therefore, $g(s,\varepsilon)$ is continuous at $(0,\varepsilon^*)$.
\end{proof}

\subsection{Proof of Theorem \ref{upth1}: lower bound}\label{subsec2}
By Lemma \ref{yohua}, to prove the lower bound, one only need to prove the following:
$$\liminf\limits_{n\rightarrow\infty}\frac{1}{n}\log\mathbb{P}(Z_n[xn,\infty)\geq e^{an})\geq-\inf_{s\in(0,1),y\in(0,x]:\atop~\frac{y}{s}>x^*,~\log m-I\left(\frac{x-y}{1-s}\right)\geq\frac{a}{1-s}}\left\{sI\left(\frac{y}{s}\right)-s\log m\right\}.$$
\par
The idea to the proof is as follows. Denote by $M_n:=\max\{S_u: u\in Z_n\}$ the maximum of the branching random walk at time $n$. Let $t_n=\lceil sn\rceil$, $s\in(0,1)$ and $u$ be the rightmost particle at time $t_n$. We let $M_{t_n}$ large enough so that the sub-BRW emanating from $u$ can normally produce $e^{an}$ descendants in $(nx,\infty)$ at time $n$. By this means, this problem transforms into the upper deviation probability of the  maximum, which has been well studied in \cite{Gantert18}. Finally, the desired lower bound follows by an optimization for $t_n$. Here, we note that
although, in \cite{Gantert18}, the step size is assumed to satisfy  $\mathbb{E}[e^{\lambda X}]<\infty$ for any $\lambda\in(-\ez,\ez)$ with some constant $\ez>0$,  in the lower bound of \cite[Theorem 3.2]{Gantert18}, one only need to assume $\mathbb{E} [e^{\lambda X}]<\infty$ for some $\lambda>0$. Hence,  in Theorem \ref{upth1}, we just assume the step size has positive exponential moment. As usual, we use $\lfloor x\rfloor$ ($\lceil x \rceil$) to stand for the largest integer not greater than $x$ (the smallest integer not less than $x$).
\begin{proof}
 Let $s\in(0,1)$, $y\in(0,x)$ satisfying
\begin{equation}\label{jmj}
\frac{y}{s}>x^*~\text{and}~\log m-I\left(\frac{x-y}{1-s}\right)\geq\frac{a}{1-s}.
\end{equation}
 Fix $\varepsilon\in(0,1)$. Let $t_n=\lceil sn\rceil$. By the Markov property,
\begin{align}\label{low1}
&\mathbb{P}(Z_n[xn,\infty)\geq e^{an})\cr
&\geq \mathbb{P}\left(M_{t_n}\geq (1+\varepsilon)yn\right)\mathbb{P}\left(Z_{n-t_n}(nx-(1+\varepsilon)yn,\infty)\geq e^{an}\right).
\end{align}
For the first factor on the r.h.s of (\ref{low1}), since $(1+\varepsilon)\frac{y}{s}>x^*$, by the proof of \cite[Theorem 3.2]{Gantert18}, we have for $n$ large enough
\begin{equation}\label{hjh}
\mathbb{P}\left(M_{t_n}\geq (1+\varepsilon)\frac{yn}{t_n}t_n\right)\geq\exp\left\{-(1+\varepsilon)\left(I\left(\frac{(1+\varepsilon)y}{s}\right)-\log m\right)t_n\right\}.
\end{equation}
\par
On the other hand, (\ref{jmj}) implies that
\begin{equation}\label{wews}
\frac{(x-(1+\varepsilon)y)}{1-s}<x^*~\text{and}~\log m-I\left(\frac{x-(1+\varepsilon)y}{1-s}\right)>\frac{a}{1-s}.
\end{equation}
Thus, for the second factor on the r.h.s of (\ref{low1}), since (\ref{wews}) holds, using Lemma \ref{a.s.} and the dominated convergence theorem, it follows that
\begin{equation}\label{wewe}
\lim\limits_{n\rightarrow\infty}\mathbb{P}\left(\frac{1}{n-t_n}\log Z_{n-t_n}\left(\frac{(x-(1+\varepsilon)y)n}{n-t_n}(n-t_n),\infty\right)\geq \frac{an}{n-t_n}\right)=1.
\end{equation}
Plugging (\ref{hjh}) and (\ref{wewe}) into (\ref{low1}) yields that for $n$ large enough
$$
\mathbb{P}(Z_n[xn,\infty)\geq e^{an})\geq 0.9 \exp\left\{-(1+\varepsilon)\left(I\left(\frac{(1+\varepsilon)y}{s}\right)-\log m\right)t_n\right\}.
$$
Taking limits yields
$$\liminf\limits_{n\rightarrow\infty}\frac{1}{n}\log\mathbb{P}(Z_n[xn,\infty)\geq e^{an})\geq-(1+\varepsilon)\left(sI\left(\frac{(1+\varepsilon)y}{s}\right)-s\log m\right).$$
Letting $\varepsilon\rightarrow0+$ and then optimizing for $s$ and $y$, we obtain
$$\liminf\limits_{n\rightarrow\infty}\frac{1}{n}\log\mathbb{P}(Z_n[xn,\infty)\geq e^{an})\geq-\inf_{s\in(0,1),y\in(0,x):\atop~\frac{y}{s}>x^*,~\log m-I\left(\frac{x-y}{1-s}\right)\geq\frac{a}{1-s}}\left\{sI\left(\frac{y}{s}\right)-s\log m\right\}
.$$
This completes the proof of the lower bound.
\end{proof}

\subsection{Proof of Theorem \ref{upth1}: upper bound}\label{subsec3}
In this subsection, we are going to prove the upper bound:
$$\limsup\limits_{n\rightarrow\infty}\frac{1}{n}\log\mathbb{P}(Z_n[xn,\infty)\geq e^{an})\leq -I(a,x)$$
\par
The idea to the proof is as follows. We divide the space-time into some paths. These paths can be classified into two categories: good path and bad path. Then, to estimate $\mathbb{P}(Z_n[xn,\infty)\geq e^{an})$ is reduced to consider the probability of the number of particles to follow a fixed path is large than $e^{an}$. For good path, the probability can be well-estimated by the Markov inequality, which exhibits an exponential decay. For bad path, the probability is essentially the upper deviation probability of an inhomogeneous Galton-Watson process, which has been well-handled in \cite[Proposition 2.1]{levelsetzhan}, and the probability turns out to be a double-exponential decay. Hence, the desired upper bound comes from an optimization for the good path.
 \begin{proof}Let $\delta\in(0,1)$. We discretize time by splitting time interval $[0,n]$ into intervals of length $\lfloor n^{\delta}\rfloor$. Let $s_0=0, s_i=n-(M-i)\lfloor n^{\delta}\rfloor$, $i=1,...,M$, where $M:=\lfloor n/\lfloor n^{\delta}\rfloor\rfloor+1$. Let $\theta$ be the constant in Theorem \ref{upth1} and  write $C=(2\log m+\log\mathbb{E}\left[e^{\theta X}\right]+I(a,x))/\theta$.

 \par
 Denote by $\text{supp} Z_n$ the support of $Z_n$. Set
 \begin{equation}\label{event1}
 E_1(n):=\left\{\text{supp} Z_i\subset(-\infty,Cn]~\text{for~every}~1\leq i\leq n\right\}.
 \end{equation}
Recall that $S_u$ is the position of particle $u$. Note that
 \begin{align}\label{343df}
 \mathbb{P}\left(E_1(n)^c\right)
 &\leq\mathbb{P}\left(\exists 1\leq i\leq n~\text{and}~u\in Z_i~\text{s.t.}~S_u\geq Cn\right)\cr
 &\leq\mathbb{E}\left[\sum^n_{i=1}\sum_{u\in Z_i}\ind_{\{S_u\geq Cn\}}\right]\cr
 &=\sum^n_{i=1}m^i\mathbb{P}(S_i\geq Cn),
 \end{align}
 where $S_i:=\sum_{\ell=1}^i X_\ell$ and $X_\ell$ is independent and identically distributed as the step size of the branching random walk.
 Since $\mathbb{E}\left[e^{\theta X}\right]\geq e^{\mathbb{E}\left[\theta X\right]}=1$, by Markov inequality, for any $1\leq i\leq n$,
 \begin{equation}\label{dfer4}
 \mathbb{P}(S_i\geq Cn)\leq \mathbb{E}\left[e^{\theta(S_i-Cn)}\right]=\mathbb{E}^i\left[e^{\theta X}\right]e^{-\theta Cn}\leq \mathbb{E}^n\left[e^{\theta X}\right]e^{-\theta Cn}.
 \end{equation}
 Plugging (\ref{dfer4}) into (\ref{343df}) yields
 $$\mathbb{P}\left(E_1(n)^c\right)\leq nm^n\mathbb{E}^n\left[e^{\theta X}\right]e^{-\theta Cn}=o\left(e^{-I(a,x)n}\right).$$
 \par
 Let $Z^u_{m}$ be the $m$-th generation of the sub-BRW emanating from particle $u$. Set
 $$E_2(n):=\left\{|Z^u_{s_{i+1}-s_{i}}|\leq n^2m^{\lfloor n^{\delta}\rfloor}~\text{for~all}~u\in Z_{s_i},~0\leq i\leq M-1\right\}.$$
 Write $v\succ u$ if $v$ is a descendant of $u$. Put
 $$D_n=\left\{v\succ u: u\in Z_{s_i},|Z^u_{s_{i+1}-s_{i}}|> n^2m^{\lfloor n^{\delta}\rfloor}~\text{for some}~0\leq i\leq M-1\right\}.$$
 Let $\bar{Z}_{s_i}$, $0\leq i\leq M$ be the branching random walk by cutting all the individuals in $D_n$. In other words, $\{\bar{Z}_{s_i}\}_{0\leq i\leq M}$ is an inhomogeneous branching random walk with point processes $Z_{s_1}(\cdot)\ind_{\left\{|Z_{s_1}|\leq n^2m^{\lfloor n^{\delta}\rfloor}\right\}}$ and $Z_{\lfloor n^{\delta}\rfloor}\ind_{\left\{|Z_{\lfloor n^{\delta}\rfloor}|\leq n^2m^{\lfloor n^{\delta}\rfloor}\right\}}$ as its evolutionary mechanism at time $0$ and after time $s_1$, respectively. It is simple to see that
 \begin{align}\label{5trgw2}
 E_2(n)=\{\bar{Z}_{s_i}=Z_{s_i},~0\leq i\leq M\}.
 \end{align}
According to \cite[Theorem 4]{athreya94}, if $\mathbb{E}[e^{\theta |Z_1|}]<\infty$ for some $\theta>0$, then there exists some $\theta_1>0$ such that $$\sup_{i\geq1}\mathbb{E}\left[e^{\theta_1W_i}\right]<\infty,$$
where $W_i:=|Z_i|m^{-i}$, $i\geq1$. Thus,
 \begin{align}\label{4tef3e}
\mathbb{P}\left(E_2(n)^c\right)&=\mathbb{P}\left(\exists 0\leq i\leq M-1~\text{and}~u\in Z_{s_i}~\text{s.t.}~|Z^u_{s_{i+1}-s_{i}}|> n^2m^{\lfloor n^{\delta}\rfloor}\right)\cr
&\leq\sum^{M-1}_{i=0}m^{s_i}\mathbb{P}\left(|Z_{\lfloor n^{\delta}\rfloor}|\geq n^2m^{\lfloor n^{\delta}\rfloor}\right)\cr
&\leq Mm^{s_M}\sup _{i\geq1}\mathbb{P}\left(W_{i}\geq n^2\right)\cr
&\leq nm^{n}\sup_{i\geq1}\mathbb{E}\left[e^{\theta_1W_i-\theta_1 n^2}\right]\cr
&\leq \sup_{i\geq1}\mathbb{E}\left[e^{\theta_1W_i}\right]nm^{n}e^{-\theta_1 n^2}=o\left(e^{-I(a,x)n}\right),
\end{align}
where the third inequality follows by Markov inequality.
\par
We write $v\prec u$ if $v$ is an ancestor of $u$. Define an event
\begin{equation}\label{event3}
E_3(n):=\left\{\forall~u\in Z_n, S_u\geq xn,~\text{and}~v\prec u,~v\in Z_{i},~1\leq i< n~\text{are~located~in}~[-Cn,\infty) \right\}.
\end{equation}
Then,
\[
\begin{split}
\mathbb{P}(E_3(n)^c)&=\mathbb{P}\left(\exists u\in Z_n~\text{and}~v\prec u~ \text{satisfying}~S_u\geq xn~\text{and}~S_v<-Cn\right)\cr
&\leq \mathbb{E}\left[\sum_{u\in Z_n}\sum_{i=1}^{n-1}\sum_{v\in Z_i
\atop v\prec u} \ind_{\{S_v<-Cn,\ S_u\geq xn\}}\right]\\
&\leq m^n\sum_{i=1}^{n-1}\mathbb{P}(S_{n-i}>(x+C)n)\\
&\leq m^n\sum_{i=1}^{n-1}\mathbb{E}^{n-i}\left[e^{\theta X}\right]e^{-\theta(x+C)n}\\
&\leq nm^{n}\mathbb{E}^{n}\left[e^{\theta X}\right]e^{-\theta(x+C)n}=o\left(e^{-I(a,x)n}\right).\\
\end{split}
\]
\par
Consequently,
$$\mathbb{P}\left(Z_n[xn,\infty)\geq e^{an}\right)\leq \mathbb{P}\left(Z_n[xn,\infty)\geq e^{an},E_1(n),E_2(n),E_3(n)\right)+o\left(e^{-I(a,x)n}\right).$$
Fix $\delta'\in(0,\delta)$ and $\varepsilon\in(0,(x/3)\wedge a\wedge\varepsilon^*)$, where $\varepsilon^*$ is the unique positive root of
$$
a=\log m+\varepsilon^*-I(x-3\varepsilon^*).
$$
So,
\begin{align}\label{54yfr4w}
a>\log m+\varepsilon-I(x-3\varepsilon).
\end{align}
We discretize space by splitting space interval $[-Cn,Cn]$ into intervals of length $ n^{\delta'}$. Let $x_k=kn^{\delta'}$, $-\lceil Cn^{1-\delta'}\rceil\leq k\leq \lceil Cn^{1-\delta'}\rceil$. We call
$$f:\{s_i:0\leq i\leq M\}\rightarrow\left\{x_k=kn^{\delta'}:-\lceil Cn^{1-\delta'}\rceil\leq k\leq \lceil Cn^{1-\delta'}\rceil\right\}$$
a path if
$$f(0)=0~\text{and}~f(s_M)\geq (x-\varepsilon)n.$$
For a branching random walk, a particle at time $s_i$ is said to follow a path $f$ until time $s_i$ if for all $0\leq j\leq i$ the ancestor of the particle at time $s_j$ lies in $\left[f(s_j)-n^{\delta'},f(s_j)+n^{\delta'}\right]$. Let
\begin{align}\label{4ree45trg}
Z_{s_i}(f):=\text{the number of particles following a path}~f~\text{until time}~s_i.
\end{align}
On the event $E_1(n)\cap E_3(n)$, we have
\begin{equation}\label{eq11}
Z_n[xn,\infty)\leq \sum_{f}Z_{s_M}(f)\leq\#(\text{paths})\max_{f}Z_{s_M}(f),
\end{equation}
where $\sum_{f}$ and $\max_f$, respectively, denote the sum and maximum over all possible paths $f$, and
$\#$(paths) stands for the total number of paths. Let $a'\in(0,a)$. Since $\#(\text{paths})\leq (2Cn^{1-\delta'}+2)^{n^{1-\delta}+1}=e^{o(n)}$ as $n\rightarrow\infty$, by (\ref{eq11}), it follows that for $n$ large enough, on the event $\{Z_n[xn,\infty)\geq e^{an}\}\cap E_1(n)\cap E_3(n)$, there exists a path $f$ such that $Z_{s_M}(f)\geq e^{a'n}$. Accordingly, for $n$ large enough,
 \begin{equation}\label{eq12}
 \mathbb{P}\left(Z_n[xn,\infty)\geq e^{an},E_1(n),E_2(n),E_3(n)\right)\leq e^{o(n)}\max_{f}\mathbb{P}\left(Z_{s_M}(f)\geq e^{a'n},E_2(n)\right).
 \end{equation}
 Since $I(a,x)$ is left continuous at $a$ (see Lemma \ref{yohua}), the proof of the upper bound is reduced to show the following:
 $$\limsup_{n\to \infty}\frac{1}{n}\max_f\log\mathbb{P}\left(Z_{s_M}(f)\geq e^{an},E_2(n)\right)\leq -I(a,x).$$
 To bound $\mathbb{P}\left(Z_{s_M}(f)\geq e^{an},E_2(n)\right)$, we distinguish two situations. A path $f$ is said to be good if
 there exists $1\leq i\leq M$ such that
 \begin{equation}\label{qwer}
 (n-s_i)\log m-I\left(\frac{f(s_M)-f(s_i)}{n-s_i}-\varepsilon\right)(n-s_i)\geq (a-\varepsilon)n.
 \end{equation}
 Otherwise, we say $f$ is a bad path.
 \par
  We claim that (\ref{qwer}) implies $f(s_i)>\varepsilon n$. In fact, if $f(s_i)\leq\varepsilon n$, then we have
 \begin{align}\label{yuiyt}
 I\left(\frac{f(s_M)-f(s_i)}{n-s_i}-\varepsilon\right)(n-s_i)&\geq I\left(\frac{f(s_M)-\varepsilon n}{n-s_i}-\varepsilon\right)(n-s_i)\cr
 &\geq I\left(\frac{(x-3\varepsilon)n}{n-s_i}\right)(n-s_i)\cr
 &\geq I\left(x-3\varepsilon\right)\frac{n}{n-s_i}(n-s_i)\cr
 &=I\left(x-3\varepsilon\right)n,
 \end{align}
 where the second inequality follows from the fact that $f(s_M)>(x-\varepsilon)n$ and the third inequality follows by the convexity of $I(\cdot)$. Since $a>\log m+\varepsilon-I(x-3\varepsilon)$, by (\ref{yuiyt}),
 \begin{align}
 I\left(\frac{f(s_M)-f(s_i)}{n-s_i}-\varepsilon\right)(n-s_i)+(a-\varepsilon)(n-s_i)&\geq I\left(x-3\varepsilon\right)n+(a-\varepsilon)(n-s_i)\cr
 &>(n-s_i)\log m,\nonumber
 \end{align}
 which contradicts with (\ref{qwer}). Thus, $f(s_i)>\varepsilon n$.
 \par
 We first consider the situation that $f$ is a good path. Observe that there exists some $N>0$ such that for $n>N$ and any good path $f$ with $i$ defined in (\ref{qwer}),
 \begin{align}
 \mathbb{P}\left(Z_{s_M}(f)\geq e^{an},E_2(n)\right)&\leq\mathbb{P}\left(Z_{s_i}(f)\geq 1\right)\cr
 &\leq m^{s_i}\mathbb{P}\left(|S_{s_i}-f(s_i)|\leq n^{\delta'}\right)\cr
 &\leq m^{s_i}\mathbb{P}\left(\frac{S_{s_i}}{s_i}>\frac{f(s_i)}{s_i}-\varepsilon\right)\cr
 &\leq \exp\left\{s_i\log m-I\left(\frac{f(s_i)}{s_i}-\varepsilon\right)s_i\right\}.\nonumber
 \end{align}
 Thus, for any good path $f$,
 \begin{align}\label{eq4}
 \mathbb{P}\left(Z_{s_M}(f)\geq e^{an},E_2(n)\right)&\leq\exp\bigg\{\sup_{s\in(0,n),y>\varepsilon n,z\geq (x-\varepsilon)n:\atop
 (n-s)\log m-I\left(\frac{z-y}{n-s}-\varepsilon\right)(n-s)\geq (a-\varepsilon)n} \left\{s\log m-I\left(\frac{y}{s}-\varepsilon\right)s\right\}\bigg\}\cr
 &=\exp\bigg\{n\sup_{s\in(0,1),y>\varepsilon,z\geq (x-\varepsilon):\atop
 (1-s)\log m-I\left(\frac{z-y}{1-s}-\varepsilon\right)(1-s)\geq (a-\varepsilon)} \left\{s\log m-I\left(\frac{y}{s}-\varepsilon\right)s\right\}\bigg\}.
\end{align}
Thus, by Lemma \ref{yohuaepsilon} and Lemma \ref{yohua}, uniformly in all good paths $f$, we have
$$\limsup_{n\to \infty}\frac{1}{n}\log\mathbb{P}\left(Z_{s_M}(f)\geq e^{an},E_2(n)\right)\leq -I(a,x).$$
\par
 Hence, to obatin the desired upper bound it suffices to show that for all bad paths $f$ uniformly,
\[\limsup_{n\to \infty}\frac{1}{n}\log\mathbb{P}(Z_{s_M}(f)\geq e^{an},\ E_2(n))=-\infty.\]
Let $\varepsilon'\in(0,\varepsilon/2)$. For any path $f$, define
\[\tau=\tau(f,n):=\inf\{i:\ 1\leq i\leq M,\ Z_{s_i}(f)\geq e^{\varepsilon'n}\},\ \inf\varnothing:=\infty.\]
On the event $\{Z_{s_M}(f)\geq e^{an}\}\cap E_2(n)$, we have $1\leq\tau\leq M-1$ and $Z_{s_\tau}(f)\leq n^2m^{\lfloor n^{\delta}\rfloor}e^{\varepsilon'n}$. Hence,
\begin{align}\label{1}
\mathbb{P}(Z_{s_M}(f)\geq e^{an},\ E_2(n))&\leq\mathbb{P}(Z_{s_M}(f)\geq e^{an},\ Z_{s_\tau}(f)\leq n^2 m^{\lfloor n^{\delta}\rfloor} e^{\varepsilon'n},\ E_2(n))\cr
&\leq \sum_{i=1}^{M-1}\mathbb{P}(Z_{s_M}(f)\geq e^{an},\ e^{\varepsilon'n}\leq Z_{s_i}(f)\leq n^2m^{\lfloor n^{\delta}\rfloor} e^{\varepsilon'n},\ E_2(n))\cr
&\leq\sum_{i=1}^{M-1}\sum_{\ell=\lfloor e^{\varepsilon'n}\rfloor}^{n^2m^{\lfloor n^{\delta}\rfloor} e^{\varepsilon'n}}\mathbb{P}(Z_{s_M}(f)\geq e^{an},~ Z_{s_i}(f)=\ell,\ E_2(n))\cr
&\leq\sum_{i=1}^{M-1}\sum_{\ell=\lfloor e^{\varepsilon'n}\rfloor}^{n^2m^{\lfloor n^{\delta}\rfloor} e^{\varepsilon'n}}\mathbb{P}(\bar{Z}_{s_M}(f)\geq e^{an},~ \bar{Z}_{s_i}(f)=\ell),
\end{align}
where $\bar{Z}_{s_i}(f)$ is defined similar to (\ref{4ree45trg}) (thus $\bar{Z}_{s_i}(f)=Z_{s_i}(f)$ on $E_2(n)$) and the last inequality follows from the following (see (\ref{5trgw2})):
 \begin{align}
 E_2(n)=\{\bar{Z}_{s_i}=Z_{s_i},~1\leq i\leq M\}.\nonumber
 \end{align}
\par
Next, we deal with the probability $\mathbb{P}(\bar{Z}_{s_M}(f)\geq e^{an},\ \bar{Z}_{s_i}(f)=\ell,\ E_2(n))$. Fix $1\leq i\leq M-1$. The sequence $(\bar{Z}_{s_{i+j}}(f), \ 0\leq j\leq M-i)$ can be written as
$$\bar{Z}_{s_{i+j+1}}(f)=\sum_{k=1}^{\bar{Z}_{s_{i+j}}(f)}\nu_k^{(j)}~\text{for}~0\leq j\leq M-i-1,$$
where $\nu_k^{(j)}$ is the number of particles in $[f(s_{i+j+1})-n^{\delta'},f(s_{i+j+1})+n^{\delta'}]$ at time $s_{i+j+1}$ generated by the $k$-th particle of $\bar{Z}_{s_{i+j}}(f)$. We define an inhomogeneous Galton-Watson process $(\tilde{Z}_{s_{i+j}}(f), 0\leq j\leq M-i)$ as follows. For $j=0$, $\tilde{Z}_{s_{i}}(f):=\bar{Z}_{s_{i}}(f)$. For $1\leq j\leq M-i$, we define it by induction. Let $u$ be the $k$th particle of $\tilde{Z}_{s_{i+j}}(f)$ with its position $S_u$. Let $\tilde{\nu}_k^{(j)}$ be the number of descendants of $u$ which lie in  $$H:=\Big[f(s_{i+j+1})-n^{\delta'}+S_u-(f(s_{i+j})+n^{\delta'}),f(s_{i+j+1})+n^{\delta'}+S_u-(f(s_{i+j})-n^{\delta'})\Big]$$
 at time $s_{i+j+1}$. For $0\leq j\leq M-i-1$, write
$$\tilde{Z}_{s_{i+j+1}}(f):=\sum_{k=1}^{\tilde{Z}_{s_{i+j}}(f)}\tilde{\nu}_k^{(j)}.$$
Note that if $S_u\in[f(s_{i+j})-n^{\delta'}, f(s_{i+j})+n^{\delta'}]$, then
$$H\supset \Big[f(s_{i+j+1})-n^{\delta'},f(s_{i+j+1})+n^{\delta'}\Big].$$
So, $\bar{Z}_{s_{i+j}}(f)\leq \tilde{Z}_{s_{i+j}}(f)$, $0\leq j\leq M-i$.
 Write $\Delta f(s_{i+j}):=f(s_{i+j+1})-f(s_{i+j})$. Note that $s_{k+1}-s_{k}=\lfloor n^\delta\rfloor$ for $k=1,...,M-1$. Thus, for $n$ large enough,
\[
\begin{split}
m_j&:=\mathbb{E}\left[\tilde{\nu}_k^{(j)}\right]\\
&=\mathbb{E}\left[|\bar{Z}^u_{\lfloor n^\delta\rfloor}|\right]\mathbb{P}\left(|S_{\lfloor n^\delta\rfloor}-\Delta f(s_{i+j})|\leq 2n^{\delta'}\right)\\
&\leq m^{\lfloor n^\delta\rfloor}\mathbb{P}\left(|S_{\lfloor n^\delta\rfloor}-\Delta f(s_{i+j})|\leq 2n^{\delta'}\right)\\
&\leq m^{\lfloor n^\delta\rfloor}\mathbb{P}\left(\frac{S_{\lfloor n^\delta\rfloor}}{\lfloor n^\delta\rfloor}>\frac{\Delta f(s_{i+j})- 2n^{\delta'}}{\lfloor n^\delta\rfloor}\right)\\
&\leq \exp\left\{\lfloor n^{\delta}\rfloor\log m-\lfloor n^\delta\rfloor I\left(\frac{\Delta f(s_{i+j})}{\lfloor n^{\delta}\rfloor}-\varepsilon\right)\right\},
\end{split}
\]
where the first inequality follows from the fact that
$$
\mathbb{E}\left[|\bar{Z}^u_{\lfloor n^\delta\rfloor}|\right]=\mathbb{E}\left[|Z_{\lfloor n^\delta\rfloor}|\ind_{\{|Z_{\lfloor n^\delta\rfloor}|\leq n^2m^{\lfloor n^{\delta}\rfloor}\}}\right]\leq m^{\lfloor n^\delta\rfloor}.
$$
Thus, for $0\leq k\leq M-i-1$, above yields
\begin{equation}\label{werw}
\prod_{j=k}^{M-i-1}m_j\leq\exp\left\{(M-i-k)\lfloor n^\delta\rfloor \log m- \lfloor n^\delta\rfloor \sum_{j=k}^{M-i-1}I\left(\frac{\Delta f(s_{i+j})}{\lfloor n^{\delta}\rfloor}-\varepsilon\right)\right\}.
\end{equation}
Since $I(\cdot)$ is convex, we have
\begin{align}
 &\frac{1}{M-i-k}\sum_{j=k}^{M-i-1} I\left(\frac{\Delta f(s_{i+j})}{\lfloor n^{\delta}\rfloor}-\varepsilon\right)\cr
 &\geq I\left(\sum_{j=k}^{M-i-1}\frac{\Delta f(s_{i+j})-\varepsilon\lfloor n^{\delta}\rfloor}{(M-i-k)\lfloor n^{\delta}\rfloor}\right)=
I\left(\frac{f(s_{M})-f(s_{i+k})}{(M-i-k)\lfloor n^\delta\rfloor}-\varepsilon\right).\nonumber
\end{align}
Using above inequality, (\ref{werw}) yields
\begin{align}\label{ytitjfs}
&\max_{0\leq k\leq M-i-1}\prod_{j=k}^{M-i-1}m_j\cr
&\leq \exp\left\{(M-i-k)\lfloor n^\delta\rfloor \log m- (M-i-k)\lfloor n^\delta\rfloor I\left(\frac{f(s_{M})-f(s_{i+k})}{(M-i-k)\lfloor n^\delta\rfloor}-\varepsilon\right)\right\}\cr
&=\exp\left\{(n-s_{i+k})\log m-(n-s_{i+k})I\left(\frac{f(s_M)-f(s_{i+k})}{n-s_{i+k}}-\varepsilon\right)\right\}\cr
&\leq e^{(a-\varepsilon)n},
\end{align}
where the last inequality comes from the definition of bad path.
 \par
Let $\alpha>1$ and $h>0$ be some constants satisfying $\alpha+h<e^{\varepsilon-2\varepsilon'}$.  There exists a positive constant $r$ depending only on $\alpha$ such that for every $y\in[0,r]$, we have $e^{y}-1\leq \alpha y$. Set
\begin{align}\label{4tgft4rt434}
\lambda_j:=m^{-2n^\delta},
\end{align}
which implies $\lambda_j\tilde{\nu}^{(j)}\leq n^2 m^{- n^\delta}$. Thus, for $n$ large enough,
\[
\begin{split}
\mathbb{E}\left[e^{\lambda_j \tilde{\nu}^{(j)}}\right]\leq \mathbb{E}[1+\alpha \lambda_j\tilde{\nu}^{(j)}]\leq1+\alpha \lambda_jm_j\leq e^{\alpha \lambda_j m_j}.
\end{split}
\]
Thus, by \cite[Proposition 2.1]{levelsetzhan}, there exists some $N'>0$ such that for $n>N'$, $i=1,2,...,M-1$ and $\lfloor e^{\varepsilon'n}\rfloor\leq \ell\leq n^2m^{\lfloor n^{\delta}\rfloor} e^{\varepsilon'n}$,
\begin{align}\label{fgegtw}
& \mathbb{P}(\bar{Z}_{s_M}(f)\geq e^{an},\ \bar{Z}_{s_i}(f)=\ell)\cr
&\leq \mathbb{P}(\tilde{Z}_{s_{M}}(f)\geq e^{an},\ \tilde{Z}_{s_{i}}(f)=\ell)\cr
&\leq\mathbb{P}\left(\left.\tilde{Z}_{s_M}(f)\geq \max\bigg\{\ell,\ (\alpha+h)^n\ell \max_{0\leq k< M-i}\prod_{j=k}^{M-i-1}m_j\bigg\}\right|\tilde{Z}_{s_i}(f)=\ell\right)\cr
&\leq n\exp\left(-\frac{ \ell h}{\alpha+h}m^{-2n^\delta}+m^{-2n^\delta}\right),
\end{align}
where the second inequality follows by the fact that from (\ref{ytitjfs}), we have
\[(\alpha+h)^n\ell \max_{0\leq k< M-i}\prod_{j=k}^{M-i-1}m_j\leq e^{(\varepsilon-2\varepsilon')n} n^2e^{\lfloor n^\delta\rfloor\log m+\varepsilon' n}e^{n(a-\varepsilon)}\leq e^{an}.\]
\par
Plugging (\ref{fgegtw}) into (\ref{1}) yields that for $n>N'$ and every bad path $f$,
\begin{equation*}
\mathbb{P}(Z_{s_M}(f)\geq e^{an},\ E_2(n))\leq n^4 e^{\lfloor n^\delta\rfloor\log m+\varepsilon' n}\exp\left(-\frac{h\lfloor e^{\varepsilon'n}\rfloor}{\alpha+h}m^{-2n^\delta}+m^{-2n^{\delta}}\right).
\end{equation*}
Thus,
\[\limsup_{n\to\infty}\frac{1}{n}\log\mathbb{P}(Z_{s_M}(f)\geq e^{an},\ E_2(n))=-\infty\]
uniformly in all bad paths $f$. Therefore, the desired upper bound follows.
\par
\end{proof}
\textbf{Proof of Remark \ref{remark1}} The proof is almost the same as the proof of Theorem \ref{upth1}. The mainly change is to use \cite[Theorem 3]{Denisov GW heavy tail} in (\ref{4tef3e}).
\section{Proof of Theorem \ref{12defedi}} \label{sec2}
In this section, we are going to prove
\begin{align}
0&<\liminf_{n\to\infty}\frac{1}{n}\log\left[-\log\mathbb{P}(Z_n([xn,\infty))\geq e^{an})\right]\cr
&\leq\limsup_{n\to\infty}\frac{1}{n}\log\left[-\log\mathbb{P}(Z_n([xn,\infty))\geq e^{an})\right]\leq c^*\log b.\nonumber
\end{align}

For the lower bound, the strategy is to force every particle to produce exactly $b$ offsprings and have displacement near $L$ before time $\lfloor c^*n\rfloor$. $c^*$ is chosen such that particles at time $\lfloor c^*n\rfloor$
can naturally produce $e^{an}/b^{\lfloor c^*n\rfloor}$ particles in $[xn,\infty)$ at time $n$.  The proof of upper bound is basically the same as Theorem \ref{upth1}. However, since under the assumptions of Theorem \ref{12defedi}, the probability that the branching random walk follows a good path is $0$ (see (\ref{eq7}) below), the decay rate comes from the bad path.
\begin{proof}
\textbf{Lower bound.} Recall that $b=\min\{k\geq m: p_k>0\}$. Let
$$u(c):=\log m-I\left(L+\frac{x-L}{1-c}\right)-\frac{a-\log b}{1-c},~c\in\left[0,\frac{x}{L}\right].$$
It is easy to see that $u(c)$ is strictly increasing and continuous on $[0,\frac{x}{L}]$. Moreover,
\begin{align}
u(0)&=\log m-I(x)-a+\log b< \log b,\cr
 u\left(\frac{x}{L}\right)&=\log m-\frac{a-\log b}{1-\frac{x}{L}}> \log m-(a-\log b)>\log b.\nonumber
\end{align}
Thus, the equation $u(c)=\log b$ has an unique solution on $(0,\frac{x}{L})$, denoted as $c^*$. Fix $\epsilon\in(0,\frac{x}{L}-c^*).$
Let $t_n:=\lfloor(c^*+\epsilon)n\rfloor.$ Since $u(c^*+\epsilon)>0$, there exists some $\eta\in(0,L)$ such that
$$\log m-I\left(\frac{x-(L-\eta)(c^*+\epsilon)}{1-(c^*+\epsilon)}\right)>\frac{a-(c^*+\epsilon)\log b}{1-(c^*+\epsilon)}.$$
Thus, by Lemma \ref{a.s.}, we have
\begin{align}\label{4res3ews}
\lim_{n\to\infty}\mathbb{P}\left(Z_{n-t_n}[nx-(L-\eta)t_n,\infty)>\frac{e^{an}}{b^{t_n}}\right)=1.
\end{align}
Recall that for a particle $u$, $(Z^u_n,~n\geq0)$ stands for the sub-BRW emanating from $u$. For a particle $v$, let $X_v$ and $|v|$ be the displacement and the generation of it, respectively. Since the branching and spatial motion are independent, we have for $n$ large enough,
\begin{align}
&\mathbb{P}(Z_n[xn,\infty)\geq e^{an})\cr
&\geq\mathbb{P}\left(\sum_{u\in Z_{t_n}}Z^u_{n-t_n}[nx-(L-\eta)t_n,\infty)\geq e^{an}, X_v\geq L-\eta, 1\leq |v|\leq t_n, |Z_{t_n}|=b^{t_n}\right)\cr
&\geq\mathbb{P}\left(Z^u_{n-t_n}[nx-(L-\eta)t_n,\infty)>\frac{e^{an}}{b^{t_n}},~\forall u\in Z_{t_n}, |Z_{t_n}|=b^{t_n}\right )\mathbb{P}\left(X\geq L-\eta\right)^{b+b^2+...+b^{t_n}}\cr
&=\mathbb{P}\left(Z_{n-t_n}[nx-(L-\eta)t_n,\infty)>\frac{e^{an}}{b^{t_n}}\right)^{b^{t_n}} \mathbb{P}\left(|Z_{t_n}|=b^{t_n}\right)   \mathbb{P}\left(X\geq L-\eta\right)^{b+b^2+...+b^{t_n}}\cr
&\geq(0.9)^{b^{t_n}}(p_b)^{1+b+b^2+...+b^{t_n-1}}\mathbb{P}\left(X\geq L-\eta\right)^{b+b^2+...+b^{t_n}},\nonumber
\end{align}
where the last inequality follows from (\ref{4res3ews}). Taking limits yields that

$$
\limsup_{n\rightarrow\infty}\frac{1}{n}\log[-\log\mathbb{P}(Z_n[xn,\infty)\geq e^{an})]\leq (c^*+\epsilon)\log b.
$$
The desired lower bound follows by letting $\epsilon\to0+$.
\par
\textbf{Upper bound.} Recall that $y_s$ is defined in Remark \ref{4r4rdef}. In the following proofs, to emphasize the dependence on $a$, we write $y_s(a):=y_s$.  We first show that both Theorem \ref{12defedi} (i) and (ii) imply
\begin{equation}\label{eq2}
\inf_{s\in(0,1-\frac{a}{\log m}]}\frac{y_s(a)}{s}>L.
\end{equation}
Obviously, (ii) yields above.
\par
In the next, we will show that (i) (i.e. $x^*=L$) also implies (\ref{eq2}). By Lemma \ref{sratefunction} (ii), we have $\lim_{x\to x^*-}I(x)=I(x^*)$ and $I(x^*)\leq \log m$.
Similar to the arguments from (\ref{4rewe22d})-(\ref{efrefe}), we get
\begin{align}\label{4rfdf3d}
\frac{y_s(a)}{s}>x^*=L\ \ \text{for }s\in\Big(0,1-\frac{a}{\log m}\Big].
\end{align}
Note that $\lim_{s\to 0+}y_s(a)/s=\infty$ and $y_s(a)/s$ is continuous w.r.t. $s$ on $(0,1-\frac{a}{\log m}]$. This, combined with (\ref{4rfdf3d}), implies (\ref{eq2}).

\par
Fix $\bar{a}\in((\log m-I(x))^+,a)$. By $\lim_{s\to 0+}y_s(\bar{a})/s=\infty$, there exists $\eta\in(0,1-\frac{a}{\log m})$ such that
\[\inf_{s\in(0,\eta)}\frac{y_s(\bar{a})}{s}>L.\]
Moreover, since $y_s(c)$ is increasing w.r.t. $c$, for any $c\in(\bar{a},a)$,
\begin{equation}\label{eq9}
\inf_{s\in(0,\eta)}\frac{y_s(c)}{s}\geq\inf_{s\in(0,\eta)} \frac{y_s(a)}{s}>x^*=L.
\end{equation}
Note that $y_s(c)/s$ is uniformly continuous on the compact set $\{(s,c):s\in[\eta,1-\frac{c}{\log m}],\ c\in[\bar{a},a]\}$. This, together with (\ref{eq2}), implies that there exists $a^*\in[\bar{a},a)$ such that for any $c\in(a^*,a)$,
\begin{equation}\label{eq10}
\inf_{s\in[\eta,1-\frac{c}{\log m}]}\frac{y_s(c)}{s}>L.
\end{equation}
By (\ref{eq9}) and (\ref{eq10}), it follows that for any $c\in(a^*,a)$,
\[l:=\inf_{s\in(0,1-\frac{c}{\log m}]}\frac{y_s(c)}{s}>L.\]
\par
Fix $c\in(a^*,a)$. By (\ref{eq3}), there exist $\rho>0$ and $\ez_0>0$ such that for any $s\in(0,\rho)$ and $\ez\in[0,\ez_0]$,
\begin{align}\label{34rfddf4}
\frac{y(s,\ez)}{s}>\frac{L+l}{2},
\end{align}
where $y(s,\ez)$ is defined in (\ref{23r4redr})-(\ref{eq3}) by replacing $a$ with $c$.
Because $y(s,\ez)/s$ is uniformly continuous on $B:=\{(s,\ez):\rho\leq s\leq 1-\frac{c-\ez}{\log m},\ 0\leq\ez\leq \ez_0\}$, there exists $\ez_2\in(0,\min\{(l-L)/2,\ez_0\})$ such that for any $\ez\in(0,\ez_2)$,
\[\inf_{s\in[\rho,1-\frac{c-\ez}{\log m}]} \frac{y(s,\ez)}{s}-\ez>l-\ez_2-\ez>L.\]
Thus, by (\ref{34rfddf4}), for any $\ez\in(0,\ez_2),$
\begin{align}\label{4r4eded3a}
\inf_{s\in(0,1-\frac{c-\ez}{\log m}]} \frac{y(s,\ez)}{s}-\ez>L.
\end{align}
\par
In the next, the proof is similar to those of Theorem \ref{upth1}. So, we only present some necessary modifications. In the definitions of $E_1(n)$ and $E_3(n)$ (see (\ref{event1}) and (\ref{event3})), we replace $C$ with $L$. Obviously,
\[\mP(E_1(n)^c)=\mP(E_3(n)^c)=0.\]
 Define
\[E_2(n):=\left\{|Z^u_{s_{i+1}-s_{i}}|\leq e^{\ez_1n}m^{\lfloor n^\delta \rfloor} \text{ for all } u\in Z_{s_i},\ 0\leq i\leq M-1\right\},\]
where $\ez_1$ is a positive number which will be determined later on.
Then, similar to (\ref{4tef3e}),
\[\mP(E_2(n)^c)\leq \sup_{i\geq 1} \mE\left[ e^{\theta_1 W_i}\right] nm^n e^{-\theta_1 e^{\ez_1 n}}.\]
Thus,
\begin{align}\label{eq5}
&\mP(Z_n[xn,\infty)\geq e^{an})\cr
&=\mP(Z_n[xn,\infty)\geq e^{an},E_1(n),E_2(n),E_3(n))+\mP(Z_n[xn,\infty)\geq e^{an},E_1(n),E_2(n)^c,E_3(n))\cr
&\leq\sup_{i\geq1} \mE\left[ e^{\theta_1 W_i}\right] nm^n e^{-\theta_1 e^{\ez_1 n}}+\mP(Z_n[xn,\infty)\geq e^{an},E_1(n),E_2(n),E_3(n)).
\end{align}
\par
It suffices to estimate $\mP(Z_n[xn,\infty)\geq e^{an},E_1(n),E_2(n),E_3(n))$. Similar to the arguments from (\ref{eq11})-(\ref{eq12}), there exists  $c\in(a^*,a)$ such that for $n$ large enough,
\begin{equation}\label{eq6}
\mP(Z_n[xn,\infty)\geq e^{an},E_1(n),E_2(n),E_3(n))\leq e^{o(n)}\max_{f}\mP(Z_{s_M}(f)\geq e^{cn},E_2(n)).
\end{equation}

\par
By (\ref{eq13}) and (\ref{eq4}), for any $\ez\in(0,\ez_2)$, there exists $N_1>0$ such that for any $n>N_1$ and any good path $f$,
 \begin{align}\label{eq7}
 \mP(Z_{s_M}(f)\geq e^{cn},E_2(n))&\leq\exp\bigg\{n\sup_{s\in(0,1),y>\varepsilon,z\geq (x-\varepsilon):\atop
 (1-s)\log m-I\left(\frac{z-y}{1-s}-\varepsilon\right)(1-s)\geq (c-\varepsilon)} \left\{s\log m-I\left(\frac{y}{s}-\varepsilon\right)s\right\}\bigg\}\cr
 &=\exp\bigg\{n\sup_{s\in\left(0,1-\frac{c-\varepsilon}{\log m}\right]} \left\{s\log m-I\left(\frac{y(s,\varepsilon)}{s}-\varepsilon\right)s\right\}\bigg\}\cr
 &=0,
 \end{align}
where the last inequality follows from (\ref{4r4eded3a}) and Lemma \ref{sratefunction} (ii).
 \par

  In the following, we consider the case that $f$ is a bad path. We replace the definition of $\lambda_j$ (see (\ref{4tgft4rt434})) with $\lambda_j:=e^{-2\ez_1 n}.$ Fix $\ez\in(0,\ez_2)$, for any $\ez'\in(0,\frac{\ez\wedge c}{2})$ and $\ez_1\in(0,\ez'/2)$, similar to the arguments from (\ref{1}) to (\ref{fgegtw}), for $n$ large enough and any bad path $f$,
\begin{align}
\mP&(Z_{s_M}(f)\geq e^{cn}, E_2(n))\cr
\leq&\sum_{i=1}^M \sum_{\ell=\lfloor e^{\ez' n}\rfloor}^{e^{\ez_1 n}m^{\lfloor n^{\delta}\rfloor} e^{\ez' n}}
 \mathbb{P}(\tilde{Z}_{s_{M}}(f)\geq e^{cn},\ \tilde{Z}_{s_{i}}(f)=\ell)\cr
\leq&\sum_{i=1}^M \sum_{\ell=\lfloor e^{\ez' n}\rfloor}^{e^{\ez_1 n}m^{\lfloor n^{\delta}\rfloor} e^{\ez' n}}
\mathbb{P}\left(\left.\tilde{Z}_{s_M}(f)\geq \max\bigg\{\ell,\ (\alpha+h)^n\ell \max_{0\leq k< M-i}\prod_{j=k}^{M-i-1}m_j\bigg\}\right|\tilde{Z}_{s_i}(f)=\ell\right)\cr
\leq&\sum_{i=1}^M \sum_{\ell=\lfloor e^{\ez' n}\rfloor}^{e^{\ez_1 n}m^{\lfloor n^{\delta}\rfloor} e^{\ez' n}}
n\exp\left(-\frac{\ell h}{\alpha+h}e^{-2\ez_1 n}+e^{-2\ez_1 n}\right),\nonumber
\end{align}
where the second inequality follows from the facts that $\ell\leq e^{\ez_1 n}m^{\lfloor n^{\delta}\rfloor} e^{\ez' n}<e^{cn}$ and
\[
\begin{split}
(\alpha+h)^n\ell\max_{0\leq k <M-i}\prod_{j=k}^{M-i-1}m_j&\leq e^{(\ez-2\ez')n} e^{\ez_1 n}e^{\lfloor n^\delta\rfloor\log m+\ez' n} e^{n(c-\ez)}\\
&=e^{(\ez_1-\ez')n}e^{\lfloor n^\delta\rfloor\log m}e^{cn}<e^{cn}.
\end{split}
\]
So, for $n$ large enough and any bad path $f$,
\[\mP(Z_{s_M}(f)\geq e^{cn}, E_2(n))\leq n^2 e^{\ez_1 n} e^{\lfloor n^\delta\rfloor\log m+\ez' n}\exp\left(-\frac{h\lfloor e^{\ez' n}\rfloor}{\alpha+h}e^{-2\ez_1 n}+e^{-2\ez_1 n}\right).\]
This, combined with (\ref{eq5}), (\ref{eq6}) and (\ref{eq7}), yields that
\[\liminf_{n\to\infty}\frac{1}{n}\log[-\log\mP(Z_n[xn,\infty)\geq e^{an})]\geq \min\{\ez_1,\ez'-2\ez_1\}>0.\]
\end{proof}

\section{Proof of Theorem \ref{remark2}} \label{sec3}
In this section, we deal with the case of $a\geq \log m$. We are going to prove
\[
\begin{split}
a-\log m&\leq\liminf_{n\to\infty}\frac{1}{n}\log[-\log\mP(Z_n([xn,\infty))\geq e^{an})]\\
&\leq \limsup_{n\to\infty}\frac{1}{n}\log[-\log\mP(Z_n([xn,\infty))\geq e^{an})]\leq\log \alpha.
\end{split}
\]
\par
For the lower bound, the strategy is to force all particles in the first $n-1$ generations to produce exactly $\alpha$ offsprings and have
displacements larger than $x$. The upper bound follows by the upper deviation probability of the total population $|Z_n|$.
\begin{proof}
\textbf{Lower bound.} Recall that $X_v$ stands for the displacement of $v$. Since $a\in[\log m,\log\mu]\cap\mathbb{R}$, $\alpha=\inf\{k:p_{k}>0,k\geq e^a\}\in[m,\infty)$, it is simple to see that
\[
\begin{split}
\mP(Z_n[xn,\infty)\geq e^{an})&\geq\mP(|Z_n|={\alpha}^n,\forall u\in Z_n, S_u\geq xn)\\
&\geq\mP\left(|Z_n|={\alpha}^n,\forall v\in Z_i, 1\leq i\leq n, X_v\geq x\right)\\
&\geq p_{\alpha}^{\sum_{i=0}^{n-1}{\alpha}^i}\mP(X \geq x)^{\sum_{i=1}^n {\alpha}^i}\\
&=p_{\alpha}^{\frac{{\alpha}^n-1}{{\alpha}-1}}\mP(X\geq x)^{\frac{{\alpha}^{n+1}-1}{\alpha-1}},
\end{split}
 \]
which implies
\[\limsup_{n\to\infty}\frac{1}{n}\log [-\log \mP(Z_n[xn,\infty)\geq e^{an})]\leq \log \alpha.\]
\textbf{Upper bound.} From \cite[Theorem 4]{athreya94}, there exists some $\theta_1>0$ such that $$\sup_{i\geq1}\mathbb{E}\left[e^{\theta_1W_i}\right]<\infty,$$
where $W_i=|Z_i|m^{-i}$, $i\geq1$. Thus, by Markov inequality,

\begin{align}
\mP(Z_n[xn,\infty)\geq e^{an})&\leq \mP(|Z_n|\geq e^{an})\cr
&=\mP\left(W_n\geq e^{(a-\log m)n}\right)\cr
&\leq\mE\left[e^{\theta_1 W_n}\right]e^{-\theta_1e^{(a-\log m)n}}.\nonumber
\end{align}
Taking limits yields that
$$
\liminf_{n\to\infty}\frac{1}{n}\log [-\log \mP(Z_n[xn,\infty)\geq e^{an})]\geq a-\log m.
$$
\end{proof}
\textbf{Proof of Remark \ref{remark3}.} Fix $a'\in((\log m-I(x))^+,a)$. Since $\lim_{s\to 0+}y_s(a')/s=\infty$(the definition of $y_s(a)$ is in Remark \ref{4r4rdef}) and $L<\infty$, there exists $\delta\in(0,1-\frac{a'}{\log m})$ such that for any $s\in(0,\delta)$, $y_s(a')/s>2L.$ Because $y_s(l)$ is increasing w.r.t. $l$, there exists $a^*\in(a',a)$ satisfying $1-\frac{a^*}{\log m}<\delta$. Thus,
\[\inf_{s\in(0,1-\frac{a^*}{\log m}]}\frac{y_s(a^*)}{s}>L.\]
Since $(\log m-I(x))^+<a^*<a=\log m$, the remark follows by using Theorem \ref{12defedi}.
\section{Proof of Theorem \ref{upth2}}\label{sec4}
In this section, we assume that $\mathbb{P}(|Z_1|>y)=\Theta(1)y^{-\beta}$ for some $\beta>1$ and $X$ is the standard normal random variable. Suppose that~$x>0$ and $a\in((\log m-\frac{x^2}{2})^+,\infty)$. We are going to prove Theorem \ref{upth2}. The idea to the proof is as follows. In the lower bound, we force particles located in high positions at time $n-1$ to produce more than $e^{an}$ children. Meanwhile, we force their children to have no big negative displacement. Naturally, the upper bound comes by arguing that if particles located in high positions at time $n-1$ produce less than $e^{an}$ children, then the event $\{Z_n[xn,\infty)\geq e^{an}\}$ happens with a negligible probability.
\begin{proof}
\textbf{Lower bound.}
We first consider the case of~$x\in(0,x^*)$. Fix~$\varepsilon\in(0,(x^*-x)/2)$. Recall that for particles $v$ and $u$, $X_v$ stands for the displacement of $v$ and $v\succ u$ means $v$ is a descendant of $u$. Define the event
$$ E_n:=\left\{\forall u\in Z_{n-1}, S_u\geq n(x+\varepsilon),~\text{we~have}~X_v\geq-\varepsilon n~\text{for}~v\succ u,~v\in Z_n\right\}.$$
Let~$k_u$ be the children number of~$u$. Let~$H_n:=\sum^n_{i=1} \left(|Z^i_1|-m\right),~n\geq1$, where~$|Z^i_1|,~i\geq 1$ are i.i.d copies of~$|Z_1|$. Recall that $S_u$ is the position of particle $u$. Write $Z_n(x,\infty)=Z_n(x)$ for short. It is easy to obatin
\begin{align}\label{ulhoff}
&\mathbb{P}(Z_n[xn,\infty)\geq e^{an})\cr
&\geq\mathbb{P}\Big(\sum_{u\in Z_{n-1}\atop S_u\geq n(x+\varepsilon)}k_u\geq e^{an}, E_n \Big)\cr
&\geq\mathbb{P}\Big(e^{3an}>\sum_{u\in Z_{n-1}\atop S_u\geq n(x+\varepsilon)}k_u\geq e^{an},\lfloor e^{2an}\rfloor\geq Z_{n-1}(n(x+\varepsilon))\geq \lfloor e^{\left(\log m-I(x+2\varepsilon)\right)n}\rfloor, E_n \Big)\cr
&\geq \Bigg[\mathbb{P}\bigg(\sum_{u\in Z_{n-1}\atop S_u\geq n(x+\varepsilon)}k_u\geq e^{an},\lfloor e^{2an}\rfloor\geq Z_{n-1}(n(x+\varepsilon))\geq \lfloor e^{\left(\log m-I(x+2\varepsilon)\right)n}\rfloor\bigg)-\cr
&~~~~~\mathbb{P}\bigg(\sum_{u\in Z_{n-1}\atop S_u\geq n(x+\varepsilon)}k_u\geq e^{3an},\lfloor e^{2an}\rfloor\geq Z_{n-1}(n(x+\varepsilon))\geq \lfloor e^{\left(\log m-I(x+2\varepsilon)\right)n}\rfloor\bigg)\Bigg]\mathbb{P}(X>-\varepsilon n)^{e^{3an}}\cr
&\geq\Bigg[\mathbb{P}\left(H_{\lfloor e^{\left(\log m-I(x+2\varepsilon)\right)n}\rfloor}\geq e^{an}-m\lfloor e^{\left(\log m-I(x+2\varepsilon)\right)n}\rfloor\right)-\mathbb{P}\left(H_{\lfloor e^{2an}\rfloor}\geq e^{3an}-m\lfloor e^{2an}\rfloor\right)\Bigg]\times\cr
&~~~~\mathbb{P}\left(\lfloor e^{2an}\rfloor\geq Z_{n-1}(n(x+\varepsilon))\geq \lfloor e^{\left(\log m-I(x+2\varepsilon)\right)n}\rfloor\right)\mathbb{P}(X>-\varepsilon n)^{e^{3an}}.
\end{align}
For the first factor on the right hand side of~(\ref{ulhoff}) , by~\cite[Theorem~1]{Nagaev82}
\begin{align}\label{uytsd1}
&\mathbb{P}\left(H_{\lfloor e^{\left(\log m-I(x+2\varepsilon)\right)n}\rfloor}\geq e^{an}-m\lfloor e^{\left(\log m-I(x+2\varepsilon)\right)n}\rfloor\right)-\mathbb{P}\left(H_{\lfloor e^{2an}\rfloor}\geq e^{3an}-m\lfloor e^{2an}\rfloor\right)\cr
&\geq 0.9e^{-\beta an}e^{\left(\log m-I(x+2\varepsilon)\right)n}-2e^{-3\beta an}e^{2an}\cr
&\geq 0.8e^{-\left[(\beta-1)a+a-(\log m-I(x+2\varepsilon))\right]n}.
\end{align}
For the second factor on the right hand side of~(\ref{ulhoff}), since~$\lim_{n\rightarrow\infty}\frac{1}{n}\log Z_n[xn,\infty)=\log m-I(x)$ a.s., it follows that for~$n$ large enough,
\begin{align}\label{uytsd2}
&\mathbb{P}\left(\lfloor e^{2an}\rfloor\geq Z_{n-1}(n(x+\varepsilon))\geq \lfloor e^{\left(\log m-I(x+2\varepsilon)\right)n}\rfloor\right)\cr
&=\mathbb{P}\left(\frac{\log\lfloor e^{2an}\rfloor}{n}\geq\frac{\log Z_{n-1}(n(x+\varepsilon))}{n}\geq \frac{\log\lfloor e^{\left(\log m-I(x+2\varepsilon)\right)n}\rfloor}{n}\right)\geq0.9.
\end{align}
From~\cite[p349]{Morters2010}, for~$y>0$,
$$\frac{y}{y^2+1}\frac{1}{\sqrt2\pi}{e^{-\frac{y^2}{2}}}\leq\mathbb{P}(X>y)\leq\frac{1}{y}\frac{1}{\sqrt2\pi}{e^{-\frac{y^2}{2}}}.$$
Thus, for the third factor on the right hand side of~(\ref{ulhoff}) , we have for~$n$ large enough,
\begin{align}\label{uytsd3}
\mathbb{P}(X>-\varepsilon n)^{e^{3an}}&=(1-\mathbb{P}(X>\varepsilon n))^{e^{3an}}\cr
&\geq (1-e^{-\varepsilon^2n^2/4})^{e^{3an}}\geq 0.9.
\end{align}
Plugging~(\ref{uytsd1}),~(\ref{uytsd2}) and~(\ref{uytsd3}) into~(\ref{ulhoff}) yields that
$$
\liminf_{n\rightarrow\infty}\frac{1}{n}\log\mathbb{P}(Z_n[xn,\infty)\geq e^{an})\geq-\left[(\beta-1)a+a-(\log m-I(x+2\varepsilon))\right].
$$
Since~$X$ is the standard normal random variable, ~$I(x)=x^2/2$. The desired lower bound follows by letting~$\varepsilon\rightarrow0+$.
\par
 In the next, we consider the case of ~$x\geq x^*$. Let $w$ be the rightmost particle at generation $n-1$. Define the event
$$T_n:=\left\{\forall v\succ w, v\in Z_{n},~\text{we have}~X_v>-\varepsilon n\right\}.$$
Let~$M_{n}$ be the rightmost position of the BRW at time~$n$. By~\cite[(12)]{Gantert18}, for $y>x^*$,
$$\lim_{n\to\infty}\frac{1}{n}\log\mathbb{P}(M_n\geq yn)=-(I(y)-\log m).$$
Since~$\mathbb{P}(k_w>y)=\Theta(1)y^{-\beta}$, we have for $n$ large enough
\begin{align}
&\mathbb{P}(Z_n[xn,\infty)\geq e^{an})\cr
&\geq\mathbb{P}\left(M_{n-1}\geq(x+\varepsilon) n, e^{2an}>k_w>e^{an}, T_n \right)\cr
&\geq\mathbb{P}\left(M_{n-1}\geq(x+\varepsilon) n\right)\mathbb{P}(e^{2an}>k_w>e^{an})\mathbb{P}(X>-\varepsilon n)^{e^{2an}}\cr
&\geq e^{-(I(x+2\varepsilon)-\log m)n} e^{-\beta an}.\nonumber
\end{align}
Taking limits yields that
$$
\liminf_{n\rightarrow\infty}\frac{1}{n}\log\mathbb{P}(Z_n[xn,\infty)\geq e^{an})\geq-\left[(\beta-1)a+a-(\log m-I(x+2\varepsilon))\right].
$$
The desired lower bound follows by letting~$\varepsilon\to0+$.
\par
\textbf{Upper bound.}
Fix~$\varepsilon\in(0,x/2)$. By Markov inequality, for $n$ large enough,
\begin{align}\label{4rrfgt1}
\mathbb{P}(Z_n[xn,\infty)\geq e^{an})&\leq\mathbb{P}\left(\sum_{u\in Z_{n-1}\atop S_u\geq n(x-\varepsilon)}k_u\geq e^{an}\right)+\mathbb{P}\left(\sum_{u\in Z_{n-1}\atop S_u\geq n(x-\varepsilon)}k_u<e^{an},Z_n[xn,\infty)\geq e^{an}\right)\cr
&\leq\mathbb{P}\left(\sum_{u\in Z_{n-1}\atop S_u\geq n(x-\varepsilon)}k_u\geq e^{an}\right)+\mathbb{P}\left(\exists u\in Z_n, X_u\geq\varepsilon n\right)\cr
&\leq \mathbb{E}\left[e^{-an}\sum_{u\in Z_{n-1}\atop S_u\geq n(x-\varepsilon)}k_u\right]+m^n\mathbb{P}\left(X\geq \varepsilon n\right)\cr
&\leq e^{-an}m^{n}\mathbb{P}(S_{n-1}\geq n(x-\varepsilon))+m^ne^{-\frac{\varepsilon^2n^2}{2}}\cr
&\leq 2e^{-an}m^{n}e^{-I(x-2\varepsilon)n},
\end{align}
where the last inequality follows from the Cram\'er theorem (see \cite[Sec 2.2]{Dembo}). Thus,
$$
\limsup_{n\rightarrow\infty}\frac{1}{n}\log\mathbb{P}(Z_n[xn,\infty)\geq e^{an})\leq-\left[a-(\log m-I(x-2\varepsilon))\right].
$$
 The desired upper bound follows by letting $\varepsilon\rightarrow0+$.
\end{proof}

\noindent\textbf{Acknowledgements}
Shuxiong Zhang is supported in part by Natural Science Foundation of Guangdong Province of China (Grant No. 2214050003543).

\smallskip
\noindent \textbf{Competing Interests}
There were no competing interests to declare which arose during the preparation or publication process of this article.

\smallskip
\noindent \textbf{Availability of data and materials}
Data sharing is not applicable to this article as no datasets were generated or analyzed during the current study.

\vspace{0.2cm}

\noindent Shuxiong Zhang\\
School of Mathematics and Statistics, Anhui Normal University, Wuhu, China.\\
E-mail: shuxiong.zhang@mail.bnu.edu.cn\\

\vspace{0.2cm}

\noindent Lianghui Luo\\
School of Mathematical Sciences, Beijing Normal University, Beijing, China.\\
E-mail: lianghui.luo@qq.com\\

\end{document}